\DeclareMathOperator*{\esssup}{esssup}
\def\qc{quasi\-conformal }
\def\po{Poincar\'{e}}
\def\md{\de}
\def\mc{\mathbb{C}}
\def\mr{\mathbb{R}}
\def\T{Teich\-m\"ul\-ler }
\def\bmu{\emu_Z}
\def\nmu{\|\mu\|_\infty}
\def\nmu{\|\mu\|_\infty}
\def\wt{\widetilde}
\def\vp{\varphi}
\def\vp{\varphi}
\def\limn{\lim_{n\to\infty}Re}
\def\pa{\partial}
\def\ov{\overline}
\def\de{\Delta}
\def\q1s{Q^1(S)}
\def\ts{T(S)}
\def\zs{Z(S)}
\def\azs{AZ(S)}
\def\azde{AZ(\de)}
\def\ats{AT(S)}
\def\qds{Q^1_d(S)}
\def\qdde{Q^1_d(\de)}
\def\atde{AT(\de)}
\def\dat{d_{AT}}
\def\oo{[[0]]}
\def\oaz{\oo_{AZ}}
\def\emu{[\mu]}
\def\emb{[\mu]_Z}
\def\azmu{[[\mu]]_{AZ}}
\def\mut{[[\mu]]}
\newtheorem{theorem}{Theorem}
\newtheorem{cor}{Corollary}
\newtheorem{lemma}{Lemma}[section]
\begin{document}

\title{\bf{Geodesic disks
in asymptotic \T space
 }}
\author{GUOWU YAO
}
\date{}
\maketitle
\begin{abstract}\noindent
Let $S$ be a hyperbolic Riemann surface.
 In a finite-dimensional \T space $\ts$, it is still an open problem whether the geodesic disk passing through two points is unique. In an infinite-dimensional \T space it is also unclear how many geodesic disks pass through a Strebel point and the basepoint while we know that there are always geodesic disks passing through a non-Strebel point and the basepoint. In this paper, we answer the problem arising in the universal asymptotic \T space and prove that there are always infinitely many geodesic disks passing through two points.

\end{abstract}
\renewcommand{\thefootnote}{}
\footnote{The   work  was
supported by the National Natural Science Foundation of China
(Grant No. 11271216).}

\footnote{{2010 \it{Mathematics Subject Classification.}} Primary
30C75,  30C62.} \footnote{{\it{Key words and phrases.}} \T space, asymptotic \T space,
  geodesic disk, substantial boundary point.}

\begin{centering}\section{\!\!\!\!\!{. }
Introduction}\label{S:intr}\end{centering}

Let $S$ be a hyperbolic Riemann surface,  that is,  it is  covered
by a holomorphic map: $\varpi: \de\to S$,  where $\de=\{|z|<1\}$ is the
open unit disk. Let $T(S)$ be the \T space of $S$.  A quotient space of the  \T space $T(S)$, called the asymptotic \T
space and denoted by $AT(S)$, was introduced by Gardiner and
Sullivan (see~\cite{GS} for $S=\de$ and by Earle, Gardiner and
Lakic for arbitrary hyperbolic
$S$~\cite{EGL0,EGL,GL}).

 $\ats$ is interesting only when $\ts$ is infinite dimensional,
which occurs when $S$ has border or when $S$ has infinite
topological type, otherwise, $\ats$ consists of just one point. In
recent years, the asymptotic space $\ats$ and its tangent space are
extensively studied, for examples, one can refer to
\cite{EGL0,EGL,EMS, Fuj, Mats, Mi, Yao}.

We shall use some geometric terminologies  adapted from \cite{Bus}
by Busemann. Let $X$ and $Y$ be metric spaces. An isometry of $X$
into $Y$ is a distance preserving map. A straight line in $Y$ is a
(necessarily closed) subset $L$ that is  an isometric image of the
real line $\mathbb{R}$.   A geodesic in $Y$ is an isometric image of
a non-trivial compact interval of $\mathbb{R}$. Its endpoints are
the images of the endpoints of the interval, and we say that the
geodesic joins its endpoints.

 Geodesics play an important role in the theory
 of \T spaces.
In an finite-dimensional \T space $\ts$, there is always a unique
geodesic connecting  two points. The geometry  is
substantially different in an infinite dimensional \T space (see
\cite{ELi,Li, Li2, Li3,Ta}). Generally,  for  a Strebel point,
   there is a unique geodesic connecting it and the basepoint. The situation on geodesics in the asymptotic \T space is still unclear while Fan \cite{Fan} gave certain examples to
show the nonuniqueness of geodesics in asymptotic spaces.

Another important role in the \T theory is the geodesic disk which is defined as the  image of an isometric embedding from $\de$ into the \T space $T(S)$ with respect to the hyperbolic metric on $\de$ and the \T metric on $T(S)$ respectively. Using holomorphic motion, Earle et al. considered holomorphic geodesic disks containing two points in \cite{EKK}. In \cite{Li4}, Li proved that there are always infinitely many geodesic disks passing through a non-Strebel point and the basepoint.  So far, we know little information on how many  geodesic disks passing through a Strebel point and the basepoint. It is even unknown whether the geodesic disk passing through two points is unique in a finite-dimensional \T space.

The motivation of the paper is to investigate  geodesic disks in the asymptotic \T space. We characterize the nonuniqueness of geodesic disks in the universal asymptotic space $\atde$ completely. That is,
\begin{theorem}\label{Th:disks}
In the universal asymptotic \T space $\atde$, there are always  infinitely many
geodesic disks containing   two points.
\end{theorem}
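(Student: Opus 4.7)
The plan is to exhibit, for any $[[\mu_0]]_{AZ} \in \atde$ with $[[\mu_0]]_{AZ} \neq \oaz$, an infinite family of unit-norm holomorphic quadratic differentials $\{\vp_\alpha\} \subset \qde$ such that the associated maps
\[
\Phi_\alpha:\de \to \atde,\qquad \Phi_\alpha(t) = [[\,t\,\bar\vp_\alpha/|\vp_\alpha|\,]]_{AZ},
\]
are pairwise distinct geodesic disks, each containing $\oaz$ and $[[\mu_0]]_{AZ}$. Write $k = \|[[\mu_0]]_{AZ}\|_{AT} = h^*([\mu_0]) \in (0,1)$.

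First I produce the family. I want each $\vp_\alpha$ chosen so that $k\,\bar\vp_\alpha/|\vp_\alpha|$ is an asymptotically extremal representative of $[[\mu_0]]_{AZ}$, meaning $[[k\bar\vp_\alpha/|\vp_\alpha|]]_{AZ} = [[\mu_0]]_{AZ}$ and $h^*([k\bar\vp_\alpha/|\vp_\alpha|]) = k$. The mechanism is the theory of substantial boundary points (signalled in the keywords): in the universal setting $\atde$, any nontrivial class admits infinitely many substantial boundary points $\zeta \in \pa\de$, and for each such $\zeta$ one can select a $\vp_\zeta \in \qde$ whose $L^1$-mass is concentrated near $\zeta$ and for which the corresponding Teichm\"uller-form Beltrami coefficient is asymptotically extremal for $[[\mu_0]]_{AZ}$. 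Varying $\zeta$ yields the desired infinite family.

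Second I verify that each $\Phi_\alpha$ is isometric for the hyperbolic metric on $\de$ and the asymptotic \T metric $\dat$. The inequality $\dat(\Phi_\alpha(s),\Phi_\alpha(t)) \leq \rho(s,t)$ is immediate from the classical fact that $t \mapsto [t\,\bar\vp_\alpha/|\vp_\alpha|]$ is a Teichm\"uller disk in $\tde$ combined with the contraction property of the projection $\tde \to \atde$. For the reverse inequality, a M\"obius automorphism of $\de$ reduces the task to $s = 0$, where it becomes the equality $h^*([t\,\bar\vp_\alpha/|\vp_\alpha|]) = |t|$ for every $t \in \de$. Since $\|t\,\bar\vp_\alpha/|\vp_\alpha|\|_\infty = |t|$, only the lower bound is substantial; this is the asymptotic extremality of the scaled Teichm\"uller-form differential $t\,\bar\vp_\alpha/|\vp_\alpha|$, which I intend to derive from that of $k\,\bar\vp_\alpha/|\vp_\alpha|$ by rescaling, exploiting the concentrated profile of $\vp_\alpha$ near its substantial boundary point.

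Third I show the disks are distinct. Two geodesic disks passing through the same two points $\oaz,[[\mu_0]]_{AZ}$ coincide as subsets only after a M\"obius self-map of $\de$ that fixes $0$ and permutes the preimages of $[[\mu_0]]_{AZ}$; such a map is at most a rotation, so $\Phi_{\alpha_1}(\de) = \Phi_{\alpha_2}(\de)$ forces $\vp_{\alpha_1}$ and $\vp_{\alpha_2}$ to be scalar multiples of each other. Taking substantial boundary points at genuinely distinct locations on $\pa\de$ yields pairwise non-proportional $\vp_\alpha$'s and hence pairwise distinct disks.

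The main obstacle will be the lower bound in the second step: showing the asymptotic extremality of $t\,\bar\vp_\alpha/|\vp_\alpha|$ for every $t \in \de$, given only that the endpoint $k\,\bar\vp_\alpha/|\vp_\alpha|$ is asymptotically extremal. In the non-asymptotic setting this propagates along a Teichm\"uller line automatically, but in $\atde$ the scaling invariance requires a separate argument, using the concentration of $\vp_\alpha$ near its substantial boundary point and a Hamilton-type sequence constructed there, so that the infimum defining $h^*$ for the scaled coefficient is still realized at $|t|$. This concentration estimate is the most delicate ingredient; once it is in place, the three steps above combine to produce the infinite family of geodesic disks asserted in the theorem.
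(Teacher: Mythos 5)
Your strategy---realizing the disks as Teichm\"uller disks $t\mapsto [[t\bar\vp_\alpha/|\vp_\alpha|]]$ attached to quadratic differentials concentrated at different substantial boundary points---rests on two existence claims that are not justified and are in general false. First, you assert that every nontrivial class admits infinitely many substantial boundary points. The set of substantial boundary points is closed and nonempty (by Lakic's theorem $h(\mut)=\max_{p}h_p(\mut)$), but it can consist of a single point $p$ (take $\mu$ supported in a small neighborhood of $p$); the paper's proof is in fact organized around exactly this dichotomy, treating non-substantial points and substantial points by entirely different constructions. Second, and more seriously, you need $k\bar\vp_\zeta/|\vp_\zeta|$ to lie in the prescribed class $\mut$ for infinitely many non-proportional $\vp_\zeta$. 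There is no Teichm\"uller existence theorem in $\atde$ guaranteeing even one representative of Teichm\"uller form in a given asymptotic class, and nothing in your argument produces one; the only existence statement available (and the one the paper uses) is that $\mut$ contains a non-Strebel extremal representative $\mu$ obtained by truncation, which need not have the form $k\bar\vp/|\vp|$. Your whole family is therefore built on objects whose existence is the crux of the matter.

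Two further steps would fail even granting the family. The distinctness argument reduces, via the M\"obius normalization, to the implication that $[[t\bar\vp_{\alpha_1}/|\vp_{\alpha_1}|]]=[[e^{i\theta}t\bar\vp_{\alpha_2}/|\vp_{\alpha_2}|]]$ for all $t$ forces proportionality of the $\vp_\alpha$'s; this is precisely the kind of uniqueness that collapses in the asymptotic setting (asymptotic classes are blind to modifications on compact sets, and Fan's examples show geodesics between two points are highly non-unique), so non-proportional $\vp_\alpha$'s give no guarantee of distinct disks. The paper instead separates its competing geodesics quantitatively, via the infinitesimal inequality of Theorem \ref{Th:biinequ} applied to the difference of the derivatives at $t=0$, or by comparing the sets of substantial boundary points of intermediate classes $[[\mu_{\lambda i}]]$. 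Finally, the lower bound $h^*([t\bar\vp_\alpha/|\vp_\alpha|])=|t|$ for all $t\in\de$, which you flag as the ``delicate ingredient,'' is left entirely open: asymptotic extremality does not propagate along a ray by classical Teichm\"uller uniqueness, and the paper secures the analogous statement only because its representative $\mu$ comes equipped with a degenerating Hamilton sequence toward a substantial boundary point, against which $t\mu/h$ can be tested directly. As it stands the proposal is a plan whose three load-bearing steps are each unproved, and the first and third are false as stated.
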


This paper is organized as follows. In Section \ref{S:prel}, we introduce some basic notion in  the \T space theory. In Section \ref{S:inequality}, an  infinitesimal inequality of the asymptotic \T metric is founded. Theorem \ref{Th:disks} is proved in Section \ref{S:disk}. A parallel version of Theorem \ref{Th:disks} in the infinitesimal setting is obtained in the last section.

The method used here can also be used to deal with some more general cases. However, there are some difficulties in solving the problem in all cases.

\begin{centering}\section{\!\!\!\!\!{. }Some Preliminaries}\label{S:prel}\end{centering}

\subsection{\T space and asymptotic \T space}
Let $S$ be a Riemann surface of topological type.  The \T space
$\ts$ is the space of equivalence classes of \qc maps $f$ from $S$
to a variable Riemann surface $f(S)$. Two \qc maps $f$ from $S$ to
$f(S)$ and $g$ from $S$ to $g(S)$ are equivalent if there is a
conformal map $c$ from $f(S)$ onto $g(S)$ and a homotopy through \qc
maps $h_t$ mapping $S$ onto $g(S)$ such that $h_0=c\circ f$, $h_1=g$
and $h_t(p)=c\circ f(p)=g(p)$ for every $t\in [0,1]$ and every $p$
in the ideal boundary of $S$. Denote by  $[f]$  the \T equivalence
class of $f$; also sometimes denote the equivalence class by $\emu$
where $\mu$ is the Beltrami differential of $f$.

The asymptotic \T space is the space of a larger equivalence classes. The definition of the new equivalence classes
 is exactly
the same as the previous definition with one exception; the word
\emph{conformal } is replaced by \emph{asymptotically conformal}. A
\qc map $f$ is asymptotically conformal if for every $\epsilon>0$,
there is a compact subset $E$ of $S$, such that the dilatation of
$f$ outside of $E$ is less than $1+\epsilon$. Accordingly, denote by
$[[f]]$ or $\mut$ the asymptotic equivalence class of $f$.

Denote by $Bel(S)$ the Banach space of Beltrami differentials
$\mu=\mu(z){d\bar z}/dz$ on $S$ with finite $L^{\infty}$-norm and by
$M(S)$ the open unit ball in $Bel(S)$.

For $\mu\in M(S)$, define
\begin{equation*}
k_0(\emu)=\inf\{\|\nu\|_\infty:\,\nu\in\emu\}.
\end{equation*}

Define $h^*(\mu)$ to be the infimum over all
 compact subsets $E$ contained in $S$ of the essential supremum
 norm of the Beltrami differential $\mu(z)$ as $z$ varies over
 $S\backslash E$ and  $h(\emu)$ to be the infimum of
 $h^*(\nu)$ taken over all representatives $\nu$ of the class
 $\emu$.  It is obvious that $h(\emu)\leq
 k_0(\emu)$. Following \cite{ELi}, $\emu$ is called
 a Strebel point if $h(\emu)<k_0(\tau)$; otherwise, $\tau$ is called
 a non-Strebel point.

Put
\begin{equation*}
h(\mut)=\inf\{h^*(\nu):\,\nu\in\mut\}.
\end{equation*}
 We say that $\mu$ is extremal  in $\emu$
  if $\nmu=k_0(\emu)$ and $\mu$ is asymptotically extremal if $h^*(\mu)=h(\mut)$. The relation $h(\emu)=h(\mut)$
   is due to the definition.

The \T metric $d_{T}$ between two points $\tau, \sigma \in \ts$ is
defined as follows:
\begin{equation*}
d_{T}(\tau, \sigma)=\frac{1}{2}\inf_{\mu\in\tau,\ \nu\in
\sigma}\log\frac{1+\|(\mu-\nu)/(1-\bar\nu\mu)\|_\infty}{1-\|(\mu-\nu)/(1-\bar\nu\mu)\|_\infty}.
\end{equation*}
The asymptotic \T metric $d_{AT}$ between two points $\wt\tau,
\wt\sigma \in \ats$ is defined by
\begin{equation*}
d_{AT}(\wt\tau,\wt \sigma)=\frac{1}{2}\inf_{\mu\in\wt\tau,\ \nu\in
\wt\sigma}\log\frac{1+\|(\mu-\nu)/(1-\bar\nu\mu)\|_\infty}{1-\|(\mu-\nu)/(1-\bar\nu\mu)\|_\infty}.
\end{equation*}
In particular, the  distance between $\mut$ and the basepoint $\oo$ is
\begin{equation*}
d_{AT}(\mut, \oo)=\frac{1}{2}\log H(\mut),\;\text {where\; }H(\mut)=\frac{1+h(\mut)}{1-h(\mut)}.
\end{equation*}

\subsection{Tangent spaces to \T space and asymptotic \T space}

The cotangent space to $\ts$ at the basepoint is the Banach space
$Q(S)$ of integrable holomorphic quadratic differentials $\vp$ on $S$ with
$L^1-$norm
\begin{equation*}
\|\vp\|=\iint_{S}|\vp(z)|\, dxdy<\infty.
 \end{equation*}    In what follows,  let $\q1s$
denote the unit sphere of $Q(S)$. Moreover, let $\qds$ denote the
set of all degenerating sequence $\{\vp_n\}\subset \q1s$. By
definition, a sequence $\{\vp_n\}$ is called degenerating if it
converges to 0 uniformly on compact subset of $S$.

Two Beltrami differentials $\mu$ and $\nu$ in $Bel(S)$ are said to
be infinitesimally equivalent if
\begin{equation*}\iint_S(\mu-\nu)\vp \, dxdy=0,  \text{
for any } \vp\in Q(S).
\end{equation*}
The tangent space $\zs$ of $\ts$ at the basepoint is defined as the
set of the quotient space of $Bel(S)$ under the equivalence
relations.  Denote by $\emb$ the equivalence class of $\mu$ in
$\zs$. The  set of all  Beltrami differentials equivalent to zero is
called the $N-$class in $Bel(S)$.

$\zs$ is a Banach space and actually \cite{GL}  its standard
sup-norm satisfies
\begin{equation*}
\|\emb\|:=\sup_{\vp\in \q1s}Re\iint_S \mu\vp \,
dxdy=\inf\{\|\nu\|_\infty:\,\nu\in\emb\}.\end{equation*}

Two Beltrami differentials $\mu$ and $\nu$ in $Bel(S)$ are said to
be infinitesimally asymptotically equivalent if
\begin{equation*}\sup_{\qds}\limsup_{n\to\infty} Re\iint_S(\mu-\nu)\vp_n \, dxdy=0,
\end{equation*}
where the first  $supremum$ is taken when  $\{\vp_n\}$ varies over
$\qds$.

 The tangent space $\azs$ of $\ats$ at the basepoint is
defined as the set of the quotient space of $Bel(S)$ under the asymptotic
equivalence relation.  Denote by $\azmu$ the equivalence class of
$\mu$ in $\azs$. The  set of all  Beltrami differentials equivalent
to zero is called the $Z_0-$class in $Bel(S)$.

Define  $b(\bmu)$ to be  the infimum over all elements in the
equivalence class $\bmu$  of the quantity $b^*(\nu)$. Here
$b^*(\nu)$ is the infimum over all compact subsets $E$ contained in
$S$ of the essential supremum of the the Beltrami differential $\nu$
as $z$ varies over $S-E$. It is obvious that $b^*(\mu)\leq\|\bmu\|
$. $\bmu$ is called
 an infinitesimal Strebel point if $b(\bmu) <\|\emb\|$. We say a
 Beltrami differential $\mu\in Bel(S)$ vanishing at infinity if
 $b^*(\mu)=0$.

Put
\begin{equation*}
b(\azmu)=\inf\{b^*(\nu):\,\nu\in\azmu\}.
\end{equation*}

 We say that $\mu$ is (infinitesimally) extremal
  if $\nmu=\|\emb\|$ and $\mu$ is (infinitesimally) asymptotically extremal if $b^*(\mu)=b(\azmu)$.
 We also have $b(\emb)=b(\azmu)$ \cite{GL}.

$\azs$ is a Banach space and   its  standard infinitesimal
asymptotic norm satisfies (see \cite{GL})
\begin{equation*}
\|\azmu\|:=\sup_{\qds}\limsup_{n\to\infty} Re\iint_S \mu\vp_n \,
dxdy=\inf\{\|\nu\|_\infty:\,\nu\in\azmu\}=b(\azmu).\end{equation*}

\subsection{Substantial boundary points and Hamilton sequence}

Now we define the notion of boundary dilatation of a \qc mapping at
a boundary point. For a Riemann surface, the meaning of what is a
boundary point can be problematic. However, if  $S$ can be embedded into a larger surface $\wt S$ such that the closure of
$S$ in $\wt S$ is compact, then it is possible to define the
boundary dilatation. From now on, we assume that $S$ is such a surface.

Let $p$ be a point on $\pa S$ and let $\mu\in Bel(S)$. Define
 \begin{equation*}
 h^*_p(\mu)=\inf\{\esssup_{z\in U\bigcap S}|\mu(z)|:\;U \text{ is an open
 neighborhood
 in } \wt{S} \text{ containing }p\}
 \end{equation*}
 to be the boundary dilatations of $\mu$ at  $p$.
If $\mu\in M(S)$, define
 \begin{equation*}
 h_p([\mu])=\inf\{h^*_p(\nu):\;\nu\in[\mu]\}
 \end{equation*}
to be the boundary dilatations $[\mu]$ at  $p$. For a general $\mu\in
Bel(S)$, define
 \begin{equation*}
 b_p([\mu]_Z)=\inf\{h^*_p(\nu):\;\nu\in \emb\}
 \end{equation*}
to be the boundary dilatations of $\emb$ at  $p$. If we define the
quantities
 \begin{equation*}
 h_p(\mut)=\inf\{h^*_p(\nu):\;\nu\in \mut\},\quad b_p(\azmu)=\inf\{h^*_p(\nu):\;\nu\in
 \azmu\},
 \end{equation*}
then  $h_p(\emu)=h_p(\mut)$ and $b_p(\emb)=b_p(\azmu)$. In particular,  Lakic \cite{La} proved that when $S$ is a plane domain,
\begin{equation*}
h(\mut)=\max_{p\in \pa S} h_p(\mut),\quad b(\azmu)=\max_{p\in \pa S}
 b_p(\azmu).
 \end{equation*}

 As is well
known, $\mu$ is extremal if and only if
  it  has a  so-called Hamilton sequence, namely, a sequence
$\{\psi_n\}\subset \q1s$, such that
\begin{equation}
\lim_{n\to\infty}Re\iint_S \mu\psi_n(z)dxdy=\nmu.
\end{equation}
Similarly, by Theorem 8 on page 281 in \cite{GL}, $\mu$ is asymptotically extremal if and only if
  it  has an asymptotic Hamilton sequence, namely, a degenerating  sequence
$\{\psi_n\}\subset \q1s$, such that
\begin{equation}
\lim_{n\to\infty}Re\iint_S \mu\psi_n(z)dxdy=h^*(\mu).
\end{equation}

Now, we assume that $S$ is a plane domain with two or more boundary points. Then, the following lemma derives from  Theorem 6 on page 333 in ~\cite{GL}:
 \begin{lemma}\label{Th:atslem1}
The following three conditions are equivalent for every boundary point
$p$ of $S$ and every asymptotic  or infinitesimal asymptotic extremal representative $\mu$:
\\(1) $h(\emu)=h_p(\emu)$ (equivalently, $h(\mut)=h_p(\mut)$),\\
(2) $b(\emu)=b_p(\emu)$   (equivalently,  $b(\azmu)=b_p(\azmu)$),\\
(3) there exists an asymptotic Hamilton sequence for $\mu $ degenerating towards
$p$, i.e., a sequence
$\{\psi_n\}\subset\q1s$   converging  uniformly to 0 on compact subsets of
$S\backslash\{p\}$, such that
\begin{equation}
\lim_{n\to\infty}Re\iint_S \mu\psi_n(z)dxdy=h_p^*(\mu).
\end{equation}
 \end{lemma}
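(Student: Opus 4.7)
My plan is to treat the three-way equivalence in two stages: first, the equivalence (1) $\Leftrightarrow$ (2) is essentially a dictionary between the Teichmüller and infinitesimal pictures, and the real work is the equivalence (1) $\Leftrightarrow$ (3). For the dictionary: the boundary dilatations $h_p^*(\mu)$ and $b_p^*(\mu)$ are literally the same quantity applied to the same Beltrami differential (both being the infimum, over $\wt S$-neighborhoods $U$ of $p$, of the essential supremum of $|\mu|$ on $U\cap S$); combined with the already-stated identities $h(\emu)=h(\mut)$, $h_p(\emu)=h_p(\mut)$, $b(\emb)=b(\azmu)$, $b_p(\emb)=b_p(\azmu)$, and the fact that extremality in the Teichmüller sense corresponds to extremality in the infinitesimal sense under the standard tangent-map identification, this makes (1) and (2) two faces of the same statement. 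So I focus on (1) $\Leftrightarrow$ (3); the infinitesimal version is formally identical.

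For the direction (3) $\Rightarrow$ (1), I would argue directly. Fix any representative $\nu\in\emu$ and any open neighborhood $U$ of $p$ in $\wt S$. Split
\begin{equation*}
\iint_S \mu\psi_n\,dxdy = \iint_{U\cap S}\mu\psi_n\,dxdy + \iint_{S\setminus U}\mu\psi_n\,dxdy.
\end{equation*}
Because the closure of $S\setminus U$ in $\wt S$ is a compact subset of $\wt S\setminus\{p\}$, the hypothesis that $\psi_n\to 0$ uniformly on such compacta, together with the boundedness of $\mu$ and the finite area of $S$, forces the second summand to vanish as $n\to\infty$. Replacing $\mu$ by any equivalent $\nu$ changes the integral by $\iint_S(\mu-\nu)\psi_n$, which also vanishes in the limit by the standard pairing of Hamilton sequences against $N$-classes. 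The first summand is bounded in absolute value by the essential supremum of $|\nu|$ on $U\cap S$, so passing to the limit and then shrinking $U$ yields $h_p^*(\mu)\le h_p^*(\nu)$; infimizing over $\nu$ gives $h_p^*(\mu)\le h_p(\emu)$. Combining with the trivial $h_p(\emu)\le h(\emu)$ and the extremality $h^*(\mu)=h(\emu)$, together with $h_p^*(\mu)=h^*(\mu)$ forced by (3), produces $h(\emu)=h_p(\emu)$, which is (1).

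The hard direction, and my main obstacle, is (1) $\Rightarrow$ (3): constructing a Hamilton sequence whose $L^1$-mass concentrates at the single boundary point $p$. My plan is to invoke Theorem 6 on p.~333 of \cite{GL}, whose proof is a Hahn--Banach/duality argument in the Banach space $\azs$. The substance I would reproduce is as follows. Condition (1), combined with Lakic's maximum formula $h(\mut)=\max_{q\in\pa S}h_q(\mut)$, implies that the linear functional $\vp\mapsto Re\iint_S \mu\vp\,dxdy$ attains its full norm $h^*(\mu)$ as a limit over quadratic differentials whose $L^1$-mass is concentrated in shrinking $\wt S$-neighborhoods $U_k$ of $p$. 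Applying the standard Hahn--Banach production of Hamilton sequences to each restricted extremal problem provides, for every $k$, an approximate maximizer $\psi_{n,k}\in\q1s$ supported essentially in $U_k\cap S$; a diagonal extraction $\psi_n=\psi_{n,k(n)}$ with $k(n)\to\infty$ then yields a sequence that converges uniformly to $0$ on compact subsets of $\wt S\setminus\{p\}$ and satisfies $\lim_{n\to\infty}Re\iint_S \mu\psi_n\,dxdy = h_p^*(\mu)$. The delicate point is to verify that this local restriction does not decrease the achievable norm; the plane-domain hypothesis, entering through Lakic's formula, rules out hidden mass at other boundary points, and this is precisely the content of the cited theorem in \cite{GL}.
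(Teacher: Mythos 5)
The first thing to note is that the paper does not prove this lemma at all: it states that the lemma ``derives from Theorem 6 on page 333 in \cite{GL}'' and moves on. So for the hard implication (1) $\Rightarrow$ (3) your plan coincides with the paper's --- both defer entirely to Gardiner--Lakic --- and there is nothing in the paper to compare your sketch of the Hahn--Banach/localization argument against. Two caveats on the parts you do argue yourself. First, the (1) $\Leftrightarrow$ (2) step is not a pure dictionary: it asserts that the \emph{global} boundary condition $h_p(\emu)=h(\emu)$ and the \emph{infinitesimal} one $b_p(\emb)=b(\emb)$ hold simultaneously for the same $\mu$, which is a genuine theorem (part of the content of [GL, Thm.~6]), not a relabeling; the identities $h_p(\emu)=h_p(\mut)$ etc.\ only translate between the Teichm\"uller and asymptotic classes, not between the global and infinitesimal pictures.

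The genuine gap is in your direction (3) $\Rightarrow$ (1), at the step ``replacing $\mu$ by any equivalent $\nu$ changes the integral by $\iint_S(\mu-\nu)\psi_n$, which also vanishes in the limit by the standard pairing of Hamilton sequences against $N$-classes.'' For $\nu\in\emu$ the difference $\mu-\nu$ is \emph{not} in the $N$-class: global Teichm\"uller equivalence does not imply infinitesimal equivalence of the Beltrami coefficients, and neither global nor asymptotic equivalence forces $\limsup_n Re\iint_S(\mu-\nu)\psi_n=0$ against degenerating sequences. Indeed, the entire point of Theorem 2 and Corollary 1 of this paper is that $\sup_{\qds}\limsup_n Re\iint_S(\mu-\nu)\phi_n$ can be strictly positive for two asymptotically extremal representatives of the \emph{same} class (this is how the author distinguishes geodesics), so the quantity you need to vanish is exactly the quantity the paper is built around not vanishing. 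The correct route to $h_p^*(\nu)\ge \lim_n Re\iint_S\mu\psi_n$ for every $\nu\in\emu$ is the quadratic Reich--Strebel machinery --- the (asymptotic) fundamental inequality applied to the sequence $\{\psi_n\}$ concentrating at $p$, as in Lakic \cite{La} and Fehlmann--Sakan \cite{FS} --- not a linear pairing argument. As written, your (3) $\Rightarrow$ (1) does not close.
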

 If one  of three conditions in the  lemma holds at some $p\in
\pa S$, we call $p$ is a substantial boundary point for $\mut$ (or
$\emu$) and $\azmu$ (or $\emb$), respectively.

\begin{centering}\section{\!\!\!\!\!{. }An infinitesimal inequality for asymptotic \T metric }\label{S:inequality}\end{centering}

\begin{theorem}\label{Th:biinequ}
Given $\mu$ and $\nu$ two
Beltrami differentials in $Bel(S)$,  then we have,
\begin{equation}\label{Eq:binary1}\liminf_{t\to 0^+}\frac{d_{AT}([[t\mu]], [[t\nu]])}{t}
\geq\sup_{\qds}\limsup_{n\to\infty} Re\iint_S(\mu-\nu)\phi_n \, dxdy,
\end{equation}
where the first  $supremum$ is taken when  $\{\phi_n\}$ varies over
$\qds$.
\end{theorem}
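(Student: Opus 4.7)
The right-hand side is, by the norm formula recalled in Section \ref{S:prel}, the infinitesimal asymptotic norm $\|[[\mu-\nu]]_{AZ}\|$. My plan is to fix an arbitrary degenerating sequence $\{\phi_n\}\in\qds$, set $L_0 = \limsup_{n\to\infty}Re\iint_S(\mu-\nu)\phi_n\,dxdy$, prove $\liminf_{t\to 0^+}d_{AT}([[t\mu]],[[t\nu]])/t\geq L_0$, and then take the supremum over $\{\phi_n\}$.

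For each small $t>0$ and $\varepsilon>0$, I invoke the definition of $d_{AT}$ to pick representatives $\mu_t\in[[t\mu]]$ and $\nu_t\in[[t\nu]]$ with $\|\alpha_t\|_\infty\leq k_t+\varepsilon$, where $\alpha_t=(\mu_t-\nu_t)/(1-\bar\nu_t\mu_t)$ and $k_t=\tanh d_{AT}([[t\mu]],[[t\nu]])$. Since $h([[t\mu]])\leq h^*(t\mu)=t\,h^*(\mu)=O(t)$ and similarly for $\nu$, I additionally arrange that $h^*(\mu_t),h^*(\nu_t)\to 0$ as $t\to 0$.

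Two key estimates drive the proof. First, whenever $\mu_t\sim_{AT}t\mu$, one has $\limsup_{n\to\infty}Re\iint_S(\mu_t-t\mu)\phi_n\,dxdy=0$ for every degenerating $\{\phi_n\}$. This follows by writing $\mu_t$ as the Beltrami coefficient of $g\circ f^{t\mu}$ with $g$ asymptotically conformal and expanding via the chain rule: $\mu_t-t\mu=\omega_t+R_t$, where $\omega_t$ comes from the pullback of $\mu_g$ (so $h^*(\omega_t)=0$) and $R_t$ is pointwise $O(t^2|\omega_t|+t|\omega_t|^2)$. Since $\phi_n$ tends to $0$ uniformly on compacta while $|\omega_t|$ and $|\omega_t|^2$ are small outside compact sets, each piece integrates to zero in the $\limsup$. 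Applied to $\mu$ and $\nu$ in turn, this yields $\limsup_{n\to\infty}Re\iint_S(\mu_t-\nu_t)\phi_n\,dxdy=tL_0$. Second, from the identity $\alpha_t=(\mu_t-\nu_t)+\alpha_t\bar\nu_t\mu_t$ the discrepancy integral satisfies $|Re\iint_S\alpha_t\bar\nu_t\mu_t\phi_n\,dxdy|\leq\|\alpha_t\|_\infty\iint_S|\mu_t||\nu_t||\phi_n|\,dxdy$, and using concentration of $\phi_n$ at $\partial S$ together with the boundary dilatation bounds, the $\limsup_n$ of the last integral is $\leq h^*(\mu_t)\,h^*(\nu_t)=o_t(1)$.

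Combining these yields $k_t+\varepsilon\geq\|\alpha_t\|_\infty\geq\limsup_{n\to\infty}Re\iint_S\alpha_t\phi_n\,dxdy\geq tL_0-(k_t+\varepsilon)\cdot o_t(1)$; rearranging and letting $\varepsilon\to 0$ gives $\liminf_{t\to 0^+}k_t/t\geq L_0$, and $d_{AT}=\tanh^{-1}k_t\geq k_t$ completes the argument. The main obstacle is the simultaneous selection in the second step: arranging near-optimality for $d_{AT}$ \emph{together with} vanishing individual boundary dilatations. The infimum defining $d_{AT}$ might a priori favor representatives with large $\|\mu_t\|_\infty$ whose values nearly cancel, while the correction estimate demands $h^*(\mu_t),h^*(\nu_t)\to 0$. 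Resolving this tension should rely on a selection lemma that replaces any near-optimal pair by one with boundary dilatations close to $h([[t\mu]])$ and $h([[t\nu]])$ without increasing $\|\alpha_t\|_\infty$ appreciably; alternatively, one could attempt a right-translation reduction to $d_{AT}([[\sigma_t]],[[0]])=\tfrac{1}{2}\log\tfrac{1+h([[\sigma_t]])}{1-h([[\sigma_t]])}$ and apply the Hamilton sequence characterization directly to $h([[\sigma_t]])$, at the cost of transporting quadratic differentials across a quasiconformal map.
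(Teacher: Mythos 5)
Your overall strategy (expand the distance to first order in $t$ by pairing near-optimal representatives against a fixed degenerating sequence) is in the right spirit, but the central step fails. Your first ``key estimate'' asserts that $\mu_t\sim_{AT}t\mu$ forces $\limsup_{n\to\infty}Re\iint_S(\mu_t-t\mu)\phi_n\,dxdy=0$ exactly, and you justify it by writing $\mu_t$ as the Beltrami coefficient of $g\circ f^{t\mu}$ with $g$ asymptotically conformal. That decomposition is not available: asymptotic equivalence means $f^{\mu_t}$ is \emph{homotopic rel the ideal boundary} to $g\circ f^{t\mu}$ for some asymptotically conformal $g$, not equal to it. The class $[[t\mu]]$ is a union of full Teichm\"uller classes, and within a single Teichm\"uller class the pairing $Re\iint_S(\sigma'-\sigma)\phi_n\,dxdy$ does not vanish in general; the (asymptotic) fundamental inequality only bounds it by a quantity of order $\|\sigma'-\sigma\|_\infty^2$. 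Since the infimum defining $d_{AT}$ ranges over representatives of arbitrary $L^\infty$-norm, this second-order error is not $o(t)$ for the near-optimal pairs you select, so your computation $\limsup_{n}Re\iint_S(\mu_t-\nu_t)\phi_n\,dxdy=tL_0$ is unjustified. The ``selection lemma'' you defer to is precisely the missing content, and even granting it you would still need the quantitative second-order estimate --- i.e., the asymptotic fundamental inequality --- in place of exact vanishing.

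The paper's proof avoids all of this by a right translation: it forms $\Lambda_t=f^{t\mu}\circ(f^{t\nu})^{-1}$ with Beltrami coefficient $\lambda_t$, so that $d_{AT}([[t\mu]],[[t\nu]])=\tfrac12\log H([[\lambda_t]])$, and applies the Asymptotic Fundamental Inequality directly to $\lambda_t$ against degenerating sequences on $S_t$ obtained by transporting the $\phi_n$ through Poincar\'e series for the deformed group $\Gamma_t$; the first-order term is then extracted by letting $G_t\to id$ and interchanging the limits. This is exactly the alternative you mention in your final sentence, and it is not optional: the ``cost of transporting quadratic differentials across a quasiconformal map'' is the price of a correct proof, because the fundamental inequality is the only tool here that controls the discrepancy uniformly over an entire asymptotic equivalence class.
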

We have an important corollary.
\begin{cor}\label{Th:munuinf}
Let $\mu$ and $\nu$ be two asymptotically extremal Beltrami
differentials in $\mut$. If the two geodesics $[[t\mu]]$ and
$[[t\nu]]$ ($0\leq t\leq 1$) coincide, then
\begin{equation*}
\sup_{\qds}\limsup_{n\to\infty} Re\iint_S(\mu-\nu)\phi_n \, dxdy=0;
\end{equation*}
in other words, $\mu$ and $\nu$ are infinitesimally asymptotically equivalent.
\end{cor}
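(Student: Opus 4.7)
The plan is to invoke Theorem \ref{Th:biinequ} directly; the conclusion then drops out from the fact that coinciding geodesics produce vanishing distances along the way, combined with the fact that the right-hand side of \eqref{Eq:binary1} is itself a norm on $\azs$.

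First I would unpack the hypothesis. Saying that the two geodesics $[[t\mu]]$ and $[[t\nu]]$ ($0 \leq t \leq 1$) coincide means, in particular, that $[[t\mu]] = [[t\nu]]$ in $\atde$ for every $t \in [0,1]$, so $d_{AT}([[t\mu]], [[t\nu]]) = 0$ identically in $t$. Therefore
$$\liminf_{t \to 0^+} \frac{d_{AT}([[t\mu]], [[t\nu]])}{t} = 0.$$
Substituting this vanishing into Theorem \ref{Th:biinequ} yields the one-sided bound
$$\sup_{\qds}\limsup_{n\to\infty} Re\iint_S(\mu-\nu)\phi_n \, dxdy \leq 0.$$

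To promote $\leq 0$ to $= 0$, I would recall the norm formula from Section \ref{S:prel}, which gives
$$\|[[\mu-\nu]]_{AZ}\| = \sup_{\qds}\limsup_{n\to\infty} Re\iint_S(\mu-\nu)\phi_n \, dxdy;$$
since the left-hand side is a norm on $\azs$, it is automatically non-negative. Combining the two inequalities pins the supremum to $0$, which is precisely the definition of $\mu$ and $\nu$ being infinitesimally asymptotically equivalent.

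I do not anticipate any real obstacle: the corollary is essentially a one-line consequence of Theorem \ref{Th:biinequ} together with the non-negativity of the asymptotic infinitesimal norm. The asymptotic extremality of $\mu$ and $\nu$ enters only to ensure that the paths $t \mapsto [[t\mu]]$ and $t \mapsto [[t\nu]]$ are genuine geodesic segments in $\atde$, so that the hypothesis ``the two geodesics coincide'' is meaningful; the deduction itself does not require extremality beyond this.
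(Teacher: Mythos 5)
Your proposal is correct and follows exactly the route the paper intends: the paper states Corollary \ref{Th:munuinf} as an immediate consequence of Theorem \ref{Th:biinequ} without writing out the deduction, and your argument supplies precisely the missing details (coincidence of the geodesics forces the $\liminf$ in \eqref{Eq:binary1} to vanish, giving $\leq 0$, while the identification of the left-hand side with $\|[[\mu-\nu]]_{AZ}\|$ from Section \ref{S:prel} gives $\geq 0$). Your closing remark about the role of asymptotic extremality is also apt, so nothing further is needed.
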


The corollary is equivalent to Theorem 5.1 in \cite{Fan}.

To prove  Theorem \ref{Th:biinequ}, we need the  asymptotic
fundamental inequality \cite{EGL,GL}, which is an asymptotic
analogue of the well known Reich-Strebel inequality \cite{RS3}.

\noindent\textbf{The Asymptotic  Fundamental Inequality.} Suppose $f$ is a \qc mapping
from $S$ to $S^\mu$ with $\mu$ its Beltrami differential. Let $H=H(\mut)$.
Then

\begin{flushleft}\begin{equation}\label{Eq:asymineq}
\frac{1}{H}\leq
\liminf_{n\to\infty}\iint_S\frac{\left|1-\mu\frac{\phi_n}{|\phi_n|}\right|^2}{1-|\mu|^2}|\phi_n |\, dxdy,
\end{equation}
\end{flushleft}
for all degenerating sequences $\{\phi_n\}\in \qds$.

\textbf{Proof of Theorem \ref{Th:biinequ}}.
Regard $S$ as $\md/\Gamma$, where $\Gamma$ is a Fuchsian group. Let $\mu$ and
$\nu$ be two Beltrami differentials in $Bel(S)$. For each $t>0$
sufficiently close to zero, there exist two Riemann surfaces $S_t$,
$R_t$ and two \qc mappings $f_t=f^{t\mu}: S\to R_t$,  $g_t=f^{t\nu}:
S\to S_t$, such that the Beltrami differentials of $f_t$ and $g_t$
are $t\mu$ and $t\nu$, respectively.  Suppose $G_t:\md\to\md$ is the
lift of $g_t$ with the points 1,  $i$ and -1 fixed. We can write
$S_t=\md/\Gamma_t$, where
\begin{equation*}
\Gamma_t=\{G_t\circ \gamma\circ G_t^{-1}|\gamma\in \Gamma\}
\end{equation*}
is a Fuchsian group.

Let $\Omega$ be a fundamental domain of $S$. Then
$\Omega_t=G_t(\Omega)$ is a fundamental domain of $S_t$.

Let $\vp$ be an   element of $\q1s$ and $\wt\vp(z)dz^2$ be the lift
of $\vp$. Then $\wt\vp$ satisfies
\begin{equation*}
\wt\vp(\gamma(z))[\gamma'(z)]^2=\wt\vp(z), \ \gamma \in \Gamma,\ \
z\in \md,\end{equation*} and
\begin{equation*}\label{Eq:equiv0}
\iint_{\gamma(\Omega)}|\wt \vp|\,dxdy\equiv1
\end{equation*} for all $\gamma\in \Gamma$.  There is a holomorphic quadratic differential
$\psi(z)dz^2\in Q(\md)$ such that the \po\ series of $\psi$ (see
\cite{Gar},  Chapter 4,  Theorem 3)
\begin{equation}\label{Eq:ser}
\Theta\psi(z)=\sum_{\gamma\in \Gamma}\psi(\gamma(z))[\gamma'(z)]^2
\end{equation} is equal to
$\wt\vp$. We define
\begin{equation*}
\wt\phi_t(z)=\sum_{\gamma_t\in
\Gamma_t}\psi(\gamma_t(z))[\gamma_t'(z)]^2.\end{equation*} Putting
\begin{equation}\label{Eq:ser1}
\wt\vp_t=\frac{\wt\phi_t}{\iint_{\Omega_t}|\wt\phi_t|dxdy},
\end{equation}
we have
\begin{equation*}
\wt\vp_t(\gamma_t(z))[\gamma_t'(z)]^2=\wt\vp_t(z),  \  z\in \md,
\end{equation*}   and \begin{equation}\label{Eq:equiv}
\iint_{{\gamma_t}(\Omega_t)}|\wt \vp_t|\, dxdy\equiv1
\end{equation}
for all  $\gamma_t\in\Gamma_t$, respectively.
 This means that
$\wt\vp_t(z)dz^2$ is a lift of a holomorphic differential $\vp_t\in
Q^1(S_t)$.

Let $\Lambda_t$ be the composition of $f_t$ and $ g_t^{-1}$,   i.e.,
$\Lambda_t=f_t\circ g_t^{-1}:S_t\to  R_t$. Denote by $\lambda_t$ the
complex dilatation of $\Lambda_t$. Let $\wt \mu$,  $\wt\nu$ and
$\wt\lambda_t$ be the lifts of $ \mu$,  $\nu$ and $\lambda_t$,
respectively. Then we have
\begin{equation*}
\wt\lambda_t(w)=\biggl[\frac{t(\wt\mu-\wt\nu)}{1-t^2\wt\mu\ov{\wt
\nu}}\cdot\frac{\pa_zG_t}{\ov{\pa_zG_t}}\biggr]\circ G_t^{-1}(w).
\end{equation*}

Let $H(t)=H([[\lambda_t]])$. Put \[h(t)=\frac{H(t)-1}{H(t)+1}.\]
We can find  a degenerating sequence $\{\vp^n\}$ in $ \qds$, such that
\begin{equation}\label{Eq:biequ}\lim_{n\to\infty} Re\iint_S(\mu-\nu)\vp^n \, dxdy
=\sup_{\qds}\limsup_{n\to\infty} Re\iint_S(\mu-\nu)\phi_n \, dxdy.
\end{equation}

Then every $\vp^n$ corresponds to a $\psi_n\in Q(\de)$. Let
$\wt\vp^n_t$ be given by (\ref{Eq:ser1}). The sequence $\{\wt
\vp_t^n\}$ is a lift of a sequence $\{ \vp_t^n\}$ in $ Q^1(S_t)$
which is  degenerating on $S_t$.

The Asymptotic Fundamental Inequality (\ref{Eq:asymineq}) implies
 \begin{equation*}
\frac{1}{H(t)}=\frac{1-h(t)}{1+h(t)}\leq
\liminf_{n\to\infty}\iint_{S_t}\frac{\left|1-\lambda_t\frac{\vp^n_t}{|\vp^n_t|}\right|^2}{1-|\lambda_t|^2}|\vp^n_t
|dudv,
\end{equation*}
which yields
\begin{equation*} h(t)\geq t
\limsup_{n\to\infty}Re\iint_{\Omega_t} \biggl[(\wt\mu-\wt\nu)
\cdot\frac{\pa_zG_t}{\ov{\pa_zG_t}}\biggr]\circ
G_t^{-1}(w)\cdot\wt\vp^n_t(w)dudv+O(t^2),
\end{equation*}
equivalently,
 \begin{equation*}h(t)\geq t
\limsup_{n\to\infty}Re\iint_{\Omega } (\wt\mu-\wt\nu)
\cdot{(\pa_zG_t)}^2|1-t\wt\nu|^2\cdot \wt\vp^n_t(G_t)\, dxdy+O(t^2).
\end{equation*}
 Therefore,
\begin{equation*} \frac{h(t)}{t}\geq
\limsup_{n\to\infty}Re\iint_{\Omega } (\wt\mu-\wt\nu)
\cdot{(\pa_zG_t)}^2\cdot \wt\vp^n_t(G_t)\, dxdy+O(t).
\end{equation*}
Furthermore, we have
\begin{align*} \liminf_{t\to0}\frac{h(t)}{t}&\geq\liminf_{t\to0}
\limsup_{n\to\infty}Re\iint_{\Omega } (\wt\mu-\wt\nu)
\cdot{(\pa_zG_t)}^2\cdot \wt\vp^n_t(G_t)\, dxdy\\
&\geq
\limsup_{n\to\infty}\liminf_{t\to 0}Re\iint_{\Omega } (\wt\mu-\wt\nu)
\cdot{(\pa_zG_t)}^2\cdot \wt\vp^n_t(G_t)\, dxdy.
\end{align*}
We now show that for every $n$,
\begin{align}\label{Eq:liminft} \liminf_{t\to 0}Re\iint_{\Omega } (\wt\mu-\wt\nu)
\cdot{(\pa_zG_t)}^2\cdot \wt\vp^n_t(G_t)\, dxdy=
Re\iint_{\Omega } (\wt\mu-\wt\nu)
\wt\vp^n(z)\, dxdy.
\end{align}
We may assume that there is a sequence $\{t_m: t_m\to 0\}$ such that
\begin{align*} &\liminf_{t\to 0}Re\iint_{\Omega } (\wt\mu-\wt\nu)
\cdot{(\pa_zG_t)}^2\cdot \wt\vp^n_t(G_t)\, dxdy\\=&
\lim_{m\to \infty}Re\iint_{\Omega } (\wt\mu-\wt\nu)
\cdot{(\pa_zG_{t_m})}^2\cdot \wt\vp^n_{t_m}(G_{t_m})\, dxdy.
\end{align*}
The fact that the Beltrami differential of $G_t$ is $t\wt\nu$
implies that $G_t$ is a good approximation of the identity map $id$
on $\md$, and hence there exists a subsequence of $\{t_m\}$, say,
itself, such that $G_{t_m}$ converges to $id$ uniformly on $\ov\md$
and $\pa_z G_{t_m}(z)\to 1$ almost everywhere in $\md$ (see
\cite{LV}). Thus, ${(\pa_zG_{t_m})}^2\to1$ a.e. in $\md$. The
fundamental domain $\Omega_{t_m}=G_{t_m}(\Omega)$ converges to
$\Omega$. $\wt\vp^n_{t_m}$ converges to $\wt\vp^n$ uniformly on
compact subset of $\md$ passing to a subsequence if necessary.
(\ref{Eq:liminft}) follows readily. Thus,
\begin{equation*}\liminf_{t\to0}\frac{h(t)}{t}\geq\lim_{n\to\infty} Re\iint_S(\mu-\nu)\vp^n \, dxdy,
\end{equation*}
In terms of (\ref{Eq:biequ}),  (\ref{Eq:binary1}) derives immediately. Theorem \ref{Th:biinequ} is proved.

\begin{centering}\section{\!\!\!\!\!{. }Geodesic disks in the universal asymptotic \T space}\label{S:disk}\end{centering}

 $\mut$ (or
$\azmu$) is called a substantial point in $\atde $  (or $AZ(\de)$), if every
 $p\in \pa \de$ is a(n) (infinitesimal) substantial boundary point for $\mut$ (or
$\azmu$); otherwise, $\mut$ (or $\azmu$) is called a non-substantial point.

Let $SP$ and $ISP$
 denote the collection of all (infinitesimal)
substantial points in $\atde$ and $AZ(\de)$, respectively. It is clear that $\atde\backslash SP$
and $AZ(\de)\backslash ISP$ are open and dense in
 $\atde$ and $AZ(\de)$,
respectively.

Let $d_H(z_1,z_2)$ denote the hyperbolic distance between two points
$z_1$, $z_2$ in  $\de$, i.e.,
\begin{equation*}
d_H(z_1,z_2)=\frac{1}{2}\log\frac{1+|\frac{z_1-z_2}{1-\bar
z_1z_2}|} {1-|\frac{z_1-z_2}{1-\bar z_1z_2}|}.
\end{equation*}

Recall that a geodesic disk in $\ats$ is the image of a map $\Psi: \de\hookrightarrow AT(S)$ which
is an isometric embedding with respect to the hyperbolic metric on
$\de$ and the  asymptotic \T metric on $AT(S)$, respectively.

We say that $\mu$ is a non-Strebel extreml if  it is an extremal representative in the non-Strebel point $\emu$ (or $\emu_Z$).
Let  $\mu$ be a non-Strebel extremal with
$\|\mu\|_\infty=k\in (0,1)$.   Then the embedding
\begin{align*}\Psi_\mu:\ &\md\hookrightarrow \ats, \\
&t\longmapsto [[t\mu/k]],
\end{align*}
is a holomorphic isometry.  Because the subsequent Lemma \ref{Th:nonstrebel} indicates that a non-Strebel extremal representative in $\mut$ always exists,  there is at least a geodesic
disk containing $\oo$ and $\mut$.

To obtain Theorem \ref{Th:disks}, we need a series of lemmas.
\begin{lemma}\label{Th:nonstrebel}

Let $\mu\in Bel (S)$. Then,
\\
(1) if $\mu\in M(S)$, then there exists a Beltrami differential
$\nu\in\mut$ such that $\nu$ is a non-Strebel extremal;\\
(2) there exists a Beltrami differential $\nu\in\azmu$ such that
$\nu$ is a non-Strebel extremal.
\end{lemma}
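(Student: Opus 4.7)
In both parts the strategy is identical: start with an asymptotically extremal representative $\mu_{0}$ of the given class---that is, $h^{*}(\mu_{0})=h(\mut)$ in part~(1) and $h^{*}(\mu_{0})=b(\azmu)$ in part~(2), whose existence is standard (cf.~\cite{GL})---and then \emph{truncate its modulus} to the level of the asymptotic infimum. Explicitly, writing $h:=h(\mut)$ (resp.\ $b:=b(\azmu)$), set
\[
\nu(z):=\mu_{0}(z)\cdot\min\!\Bigl(1,\ \frac{h}{|\mu_{0}(z)|}\Bigr),
\]
with the convention $\nu(z)=0$ where $\mu_{0}(z)=0$. Then $|\nu|\le h$ pointwise and $\nu=\mu_{0}$ on $\{|\mu_{0}|\le h\}$.

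The heart of the argument is to verify that $\nu$ lies in the same asymptotic class as $\mu_{0}$. Given $\varepsilon>0$, asymptotic extremality produces a compact $E\subset S$ with $|\mu_{0}|\le h+\varepsilon$ on $S\setminus E$; hence $\nu-\mu_{0}$ vanishes where $|\mu_{0}|\le h$ and $|\nu-\mu_{0}|=(|\mu_{0}|-h)_{+}\le\varepsilon$ on $S\setminus E$. \emph{For part~(2)}, splitting any integral $\iint_{S}(\nu-\mu_{0})\phi_{n}\,dxdy$ for $\{\phi_{n}\}\in\qds$ into pieces over $E$ (where $\phi_{n}\to 0$ uniformly) and over $S\setminus E$ (where the integrand is bounded by $\varepsilon|\phi_{n}|$ and $\|\phi_{n}\|_{1}=1$) yields $\sup_{\qds}\limsup_{n}Re\iint_{S}(\nu-\mu_{0})\phi_{n}\,dxdy\le\varepsilon$, and letting $\varepsilon\to 0$ gives $\nu\sim_{ia}\mu_{0}$, so $\nu\in\azmu$. \emph{For part~(1)}, form the post-composition $c:=f^{\nu}\circ(f^{\mu_{0}})^{-1}:S^{\mu_{0}}\to S^{\nu}$; a direct computation gives
\[
|\mu_{c}(f^{\mu_{0}}(z))|=\frac{|\nu(z)-\mu_{0}(z)|}{|1-\overline{\mu_{0}(z)}\nu(z)|}\le\frac{\varepsilon}{1-\|\mu_{0}\|_{\infty}^{2}}
\]
on the complement of the compact set $f^{\mu_{0}}(E)$ in $S^{\mu_{0}}$. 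Since $\varepsilon>0$ is arbitrary, $c$ is asymptotically conformal, so $\nu\sim_{a}\mu_{0}$ and $\nu\in\mut$.

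Having $\nu\in\mut$ (resp.\ $\azmu$) forces $h([\nu])=h(\mut)=h$ (resp.\ $b([\nu]_{Z})=b$), and combining with $\|\nu\|_{\infty}\le h$ and the inequalities $h([\nu])\le k_{0}([\nu])\le\|\nu\|_{\infty}$ (resp.\ $b([\nu]_{Z})\le\|[\nu]_{Z}\|\le\|\nu\|_{\infty}$) forces all these quantities to equal $h$ (resp.\ $b$). This is exactly the statement that $[\nu]$ (resp.\ $[\nu]_{Z}$) is a non-Strebel point and that $\nu$ is extremal within it, i.e.\ $\nu$ is a non-Strebel extremal.

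The main obstacle is the asymptotic-conformality claim in part~(1): one must exhibit a genuine asymptotically conformal change of target realizing the truncation $\mu_{0}\leadsto\nu$. This forces $|\mu_{0}|-h$ to be small off a single compact set, which is precisely why one starts from an asymptotically extremal $\mu_{0}$ rather than an arbitrary representative---otherwise the post-composition would have a Beltrami coefficient bounded below by a positive constant on regions reaching the ideal boundary, and could not be asymptotically conformal.
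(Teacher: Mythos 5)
Your proof is correct and takes essentially the same route as the paper: pick an asymptotically extremal representative and truncate its modulus at the level $h(\mut)$ (resp.\ $b(\azmu)$), which is exactly the paper's construction. The only difference is that the paper dismisses the verification that the truncation stays in the same (asymptotic) class and is a non-Strebel extremal as ``easy to verify,'' whereas you write it out, and you also carry out part (2), which the paper omits.
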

\begin{proof}
We only show the first part (1).

By Theorem 2  on page 296 of \cite{GL}, there is an asymptotic extremal representative in $\mut$, say $\mu$, such that $h(\mut)=h^*(\mu)$.
If $h^*(\mu)=0$, let $\nu$ be identically zero. If $h^*(\mu)>0$, put
\begin{equation*}\nu(z)=
\begin{cases}
\mu(z),& |\mu(z)|\leq h^*(\mu),\\
h^*(\mu)\mu(z)/|\mu(z)|,& |\mu(z)|> h^*(\mu).
\end{cases}
\end{equation*}
In either case, it is easy to verify that $\nu\in \mut$  and is a non-Strebel extremal.
\end{proof}

\begin{lemma}\label{Th:asynu}
Let $\mu\in Bel (\de)$ and $p\in \pa \de$. Then,
\\
(1) if $\mu\in M(\de)$, then there exists a Beltrami differential
$\nu\in\mut$ such that $\nu$ is a non-Strebel extremal  and
$h^*_p(\nu)=h_p(\mut)$;\\
(2) there exists a Beltrami differential $\nu\in\azmu$ such that
$\nu$ is a non-Strebel extremal and $b^*_p(\nu)= b_p(\azmu)$.
\end{lemma}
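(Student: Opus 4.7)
My plan is to derive Lemma~\ref{Th:asynu} from Lemma~\ref{Th:nonstrebel} by performing the same truncation trick on a more carefully chosen representative. I focus on part~(1); part~(2) is the parallel infinitesimal statement with $b^*,b^*_p,b(\azmu),\azmu$ taking the roles of $h^*,h^*_p,h(\mut),\mut$ and the truncation done at level $b(\azmu)$.

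Suppose we can produce $\mu\in\mut$ with $h^*(\mu)=h(\mut)$ and $h^*_p(\mu)=h_p(\mut)$. Define
\begin{equation*}
\nu(z)=\begin{cases}\mu(z), & |\mu(z)|\leq h(\mut),\\ h(\mut)\mu(z)/|\mu(z)|, & |\mu(z)|>h(\mut).\end{cases}
\end{equation*}
The proof of Lemma~\ref{Th:nonstrebel} applies verbatim: the identity $h^*(\mu)=h(\mut)$ forces $h^*(\mu-\nu)=0$, hence $\nu\in\mut$; combined with $\|\nu\|_\infty\leq h(\mut)$ and $\|\nu\|_\infty\geq k_0(\enu)\geq h(\enu)=h(\mut)$, equality is forced throughout and $\nu$ becomes a non-Strebel extremal. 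The pointwise domination $|\nu(z)|\leq|\mu(z)|$ yields $h^*_p(\nu)\leq h^*_p(\mu)=h_p(\mut)$, matched by the reverse inequality from $\nu\in\mut$.

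The main obstacle is producing such a doubly-optimal $\mu$. By Theorem~2 on page~296 of~\cite{GL}, fix an asymptotic extremal $\mu_A\in\mut$, i.e.\ $h^*(\mu_A)=h(\mut)$. Using the identity $h_p(\mut)=h_p(\emu)$ recalled before Lemma~\ref{Th:atslem1} together with the classical attainment of boundary dilatation within the class $\emu$, fix $\mu_B\in\mut$ with $h^*_p(\mu_B)=h_p(\mut)$. My proposed construction merges them by a shrinking-shell patching: choose decreasing open neighborhoods $V_n\supset V_{n+1}$ of $p$ in $\wt\de$ with $\bigcap_n V_n=\{p\}$ and, shrinking if necessary, $|\mu_B(z)|\leq h_p(\mut)+1/n$ on $V_n\cap\de$, and set $\mu=\mu_A$ on $\de\setminus V_1$ and $\mu=\mu_B$ on each shell $V_n\cap\de\setminus V_{n+1}$. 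A Lakic-type analysis of $h^*(\mu)=\max_{q\in\pa\de}h_q(\mu)$ immediately gives $h^*_p(\mu)\leq h_p(\mut)$ and $h^*(\mu)\leq h(\mut)$, and the reverse bounds from $\mu\in\mut$ force equalities.

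The delicate point is $\mu\in\mut$: equivalently, that $f^\mu\circ(f^{\mu_A})^{-1}$ be asymptotically conformal on $\de$. The Beltrami coefficient of this composition vanishes on $\de\setminus V_1$ but is proportional to $\mu_B-\mu_A$ near $p$, and a naive single-shell patch could destroy asymptotic conformality at the one boundary point $p$. Overcoming this is the technical crux of the proof; the key input is that $\mu_A,\mu_B\in\mut$ already makes $f^{\mu_B}\circ(f^{\mu_A})^{-1}$ asymptotically conformal, and one exploits the localization $V_n\to\{p\}$ together with this asymptotic conformality to argue that the dilatation of the composition still tends to $1$ at every boundary point, including $p$. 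Part~(2) follows verbatim from the infinitesimal counterparts of the two cited results from~\cite{GL}.
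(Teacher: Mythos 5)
Your reduction is attractive and its second half is sound: if a single representative $\mu\in\mut$ could be found with \emph{both} $h^*(\mu)=h(\mut)$ and $h^*_p(\mu)=h_p(\mut)$, then the truncation of Lemma \ref{Th:nonstrebel} together with the pointwise bound $|\nu|\leq|\mu|$ would finish the proof exactly as you say. The gap is entirely in the construction of this doubly optimal $\mu$, and I do not think the patch $\mu=\mu_A$ on $\de\setminus V_1$, $\mu=\mu_B$ on $V_1\cap\de$ (your ``shells'' are vacuous, since you put $\mu=\mu_B$ on every shell) can be repaired along the lines you sketch. First, your ``key input'' is false: $\mu_A,\mu_B\in\mut$ does \emph{not} make $f^{\mu_B}\circ(f^{\mu_A})^{-1}$ asymptotically conformal; asymptotic equivalence only provides \emph{some} asymptotically conformal $c$ with the same ideal boundary values, and in general $\mu_B-\mu_A$ need not vanish at infinity (take $\mu_A=0$ and $\mu_B$ asymptotically trivial but bounded away from $0$ near $\pa\de$). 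Second, and decisively, the Beltrami coefficient of $f^{\mu}\circ(f^{\mu_A})^{-1}$ on $f^{\mu_A}(V_1\cap\de)$ has modulus $\bigl|(\mu_B-\mu_A)/(1-\overline{\mu_A}\mu_B)\bigr|$, which in the relevant case $h_p(\mut)<h(\mut)$ is bounded below by roughly $h-h_p>0$ on a set accumulating at the whole arc $\pa\de\cap V_1$ (near $p$ one has $|\mu_B|\leq h_p+\eps$ while $|\mu_A|$ is close to $h$). So the dilatation of the composition does \emph{not} tend to $1$ along that arc, and not only at the single point $p$ as you suggest; verifying $\mu\in\mut$ would instead require showing that the boundary correspondence $f^{\mu}\circ(f^{\mu_A})^{-1}|_{\pa\de}$ is symmetric, which is a genuinely different localization argument that your proposal does not supply.

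There is a second, independent defect: even granting $\mu\in\mut$, you need $h^*(\mu)=h(\mut)$ for the truncation step to preserve the class, but at boundary points $q\neq p$ of the arc $\pa\de\cap V_1$ you only know $h^*_q(\mu)=h^*_q(\mu_B)\leq\|\mu_B\|_\infty$, which may well exceed $h(\mut)$; your shrinking neighborhoods control $\mu_B$ only at $p$ itself. The paper's proof is built precisely to avoid both problems: it never glues two globally equivalent coefficients across an interface, but instead iteratively replaces the representative on a shrinking chain of subdomains $E_{n_j+1}$ by a non-Strebel extremal of the \emph{restricted} asymptotic class $AT(E_{n_j}+1)$ whose sup-norm is kept below $\min\{h_p(\mut)+2^{-j},h\}$ at every stage. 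This keeps $|\nu|\leq h$ everywhere (so no final truncation is needed and extremality is automatic), drives $h^*_p(\nu)$ down to exactly $h_p(\mut)$ in the limit, and only ever modifies the coefficient within the asymptotic class of a subdomain, which is the (weaker) patching principle the argument actually rests on. You would need either to adopt that iterative scheme or to prove a bona fide gluing theorem for asymptotic classes; as written, the crux of your proof is asserted rather than proved, and the specific mechanism you propose for it is incorrect.
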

\begin{proof}We also only show the first part (1).

Case 1. $h_p(\mut)=h(\mut):=h$.

The case  has a trivial proof since any non-Strebel extremal representative in $\mut$ has the required property.

Case 2. $h_p(\mut)<h(\mut):=h$.

 By the definition of
boundary dilatation, there exists a Beltrami differential $\chi\in
\mut$ such that  $h^*_p(\chi)<\min\{ h_p(\mut)+\frac{1}{2},h\}$.

Let $E_n =\{z\in \de:\; |z-p|<\frac{1}{n}\}$. Then  $E_n$
 converges to $\emptyset$ as $n\to \infty$.

There is a number $n_1\geq 1$ such that  $\|\chi\|_\infty<
\min\{h_p(\mut)+\frac{1}{2},h\}$ restricted on $E_{n_1}$.

 Restrict $\chi$ on $\de\backslash E_{n_1+1}$ and
regard $[[\chi]]$ as a point in $AT(\de\backslash
 E_{n_1+1})$. Then $h([[\chi]])=h$. By Lemma \ref{Th:nonstrebel}, we can choose a non-Strebel extremal in $[[\chi]]$,
say $\chi_1(z)$. Define
\begin{equation*}
\nu_1(z)=
\begin{cases}
\chi_1(z),& z\in \de\backslash  E_{n_1+1},\\
\chi(z), &z\in   E_{n_1+1}.\end{cases}
\end{equation*}
Then, $\nu_1$ is a non-Strebel extremal in $\mut$ and $|\nu(z)|\leq
\min\{h_p(\mut)+\frac{1}{2},h\}$ in $E_{n_1+1}$ almost everywhere.

Consider $\nu_1(z)$ on $E_{n_1+1}$. We still have
$h^*_p(\nu_1)=h_p(\mut)$. By the same reason, there is a number
$n_2>n_1+1$, and a Beltrami differential $\nu_2(z)\in M(E_{n_1+1})$
such that $\nu_2\in [[\nu_1]]$ regarded as a
point in $AT(E_{n_1+1})$, $\|\nu_2\|_\infty<
\min\{h_p(\mut)+\frac{1}{2},h\}$ and
$|\nu_2(z)|<\min\{h_p(\mut)+\frac{1}{2^2},h\}$ in $E_{n_2+1}$ almost
everywhere.

Following the construction, we get a sequence $n_{j+1}$ ($j\geq1$),
$n_{j+1}\to \infty$ ($j\to \infty$) and a sequence $\nu_{j+1}(z)\in
M(E_{n_{j}+1})$ such that $\nu_{j+1}\in [[\nu_j]]$ regarded as a
point in $AT(E_{n_{j}+1})$, $\|\nu_{j+1}\|_\infty<
\min\{h_p(\mut)+\frac{1}{2^{j}},h\}$ and
$|\nu_{j+1}(z)|<\min\{h_p(\mut)+\frac{1}{2^{j+1}},h\}$ in
$E_{n_{j+1}+1}$ almost everywhere.

Define
\begin{equation*}
\nu(z)=
\begin{cases}
\nu_1(z),& z\in \de\backslash E_{n_1+1},\\
\nu_2(z), &z\in   E_{n_1+1}\backslash E_{n_2+1},\\
\;\;\,\vdots\\
\nu_j(z), &z\in   E_{n_{j-1}+1}\backslash E_{n_j+1},\\
\;\;\,\vdots\
\end{cases}
\end{equation*}
Then $\nu$ belongs to $\mut$ and is the desired non-Strebel
extremal.
\end{proof}

\begin{lemma}\label{Th:dist}
Let $t_1$ and $t_2$ be two complex numbers and $k_1$ and $k_2$ be
two real numbers. Then we have
\begin{equation}\label{Eq:dist}
\left|\frac{(t_1-t_2)k_1}{1-\ov{t_2}t_1k_1^2}\right|\leq\left|\frac{(t_1-t_2)k_2}{1-\ov{t_2}t_1k_2^2}\right|,\,
\text{ if }\;0<k_1\leq k_2\,\text{ and }\; k_2^2|t_1t_2|<1.
\end{equation}
\end{lemma}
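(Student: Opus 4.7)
The plan is to reduce the inequality to a one-dimensional monotonicity problem. If $t_1=t_2$ both sides vanish, so I may assume $t_1\neq t_2$ and cancel the common factor $|t_1-t_2|$. Writing $w=\ov{t_2}t_1$ (so that $|w|=|t_1t_2|$), the claim becomes
$$\frac{k_1}{|1-wk_1^2|}\leq\frac{k_2}{|1-wk_2^2|},$$
under the hypothesis $k_2^2|w|<1$.

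Next I would square both sides and substitute $u=k^2$, which turns the problem into showing that
$$h(u):=\frac{u}{|1-wu|^2}=\frac{u}{1-2\,\mathrm{Re}(w)\,u+|w|^2u^2}$$
is non-decreasing on the interval $(0,k_2^2]$. A direct differentiation (in which the linear $\mathrm{Re}(w)$ term cancels with the contribution from the numerator) gives
$$h'(u)=\frac{1-|w|^2u^2}{\bigl(1-2\,\mathrm{Re}(w)\,u+|w|^2u^2\bigr)^2},$$
which is strictly positive precisely when $|w|u<1$. Since $k_2^2|w|<1$ by hypothesis, both $k_1^2$ and $k_2^2$ lie in the range where $h$ is increasing, so $h(k_1^2)\leq h(k_2^2)$ and the inequality follows by taking square roots.

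There is no real obstacle here; this is essentially a single-variable calculus exercise once the reduction to $h(u)$ is made. The key observation is that the hypothesis $k_2^2|t_1t_2|<1$ is exactly what keeps the factor $1-|w|^2u^2$ positive throughout the relevant range. The degenerate case $w=0$ (i.e.\ $t_1=0$ or $t_2=0$) reduces immediately to the trivial inequality $k_1\leq k_2$, and the denominators never vanish in the range of interest because $|wu|<1$ implies $|1-wu|>0$.
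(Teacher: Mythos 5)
Your proof is correct and follows essentially the same route as the paper: both reduce the claim to the monotonicity of the squared quantity $\left|\frac{(t_1-t_2)k}{1-\ov{t_2}t_1k^2}\right|^2$ on the interval where $k^2|t_1t_2|<1$. Your substitution $u=k^2$ merely makes explicit the derivative computation that the paper dismisses as ``easy to verify,'' and your formula $h'(u)=\bigl(1-|w|^2u^2\bigr)/\bigl(1-2\,\mathrm{Re}(w)\,u+|w|^2u^2\bigr)^2$ checks out.
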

\begin{proof}
Without any loss of generality, we may assume that $t_1t_2\neq0$.
Let $k$ be a real variable and put
\begin{equation*}
F(k)=\left|\frac{(t_1-t_2)k}{1-\ov{t_2}t_1k^2}\right|^2=
\frac{|t_1-t_2|^2k^2}{1+|t_1t_2|^2k^4-2k^2Re(\ov{t_2}t_1)}.
\end{equation*}
It is easy to verify that $F'(k)\geq0$ as $k\in
(0,1/\sqrt{|t_1t_2|})$. Therefore $F(k)$ is an increasing function
on $(0,1/\sqrt{|t_1t_2|})$ and hence (\ref{Eq:dist}) holds.
\end{proof}

\begin{lemma}\label{Th:ret}
Given $k\in (0,1]$ and $s,\,t\in\de$, then
\begin{equation}
\biggl|\frac{(Re(s)-Re(t))k}{1-Re(s)\overline{Re(t)}k^2}\biggr|\leq
\biggl|\frac{t-s}{1-s\bar t}\biggr|.
\end{equation}
\end{lemma}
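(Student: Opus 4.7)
The plan is to reduce the inequality in two steps: first to the case $k=1$ using Lemma \ref{Th:dist}, and then to an elementary algebraic inequality in the real and imaginary parts of $s$ and $t$.

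For the first reduction I would apply Lemma \ref{Th:dist} with $t_1=Re(s)$, $t_2=Re(t)$, $k_1=k$ and $k_2=1$. Since $Re(s)$ and $Re(t)$ are real we have $\ov{Re(t)}=Re(t)$, and the hypothesis $k_2^2|t_1t_2|=|Re(s)Re(t)|<1$ is immediate from $|Re(s)|\leq|s|<1$ and $|Re(t)|\leq|t|<1$. The conclusion reads
\[
\left|\frac{(Re(s)-Re(t))k}{1-Re(s)Re(t)k^2}\right|\leq\left|\frac{Re(s)-Re(t)}{1-Re(s)Re(t)}\right|,
\]
so it suffices to prove the case $k=1$, namely
\[
\left|\frac{Re(s)-Re(t)}{1-Re(s)Re(t)}\right|\leq\left|\frac{t-s}{1-s\bar t}\right|.
\]

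To establish this I would write $s=a+ib$, $t=c+id$ with $a,b,c,d\in\mr$, square both sides and clear denominators. The target becomes
\[
(a-c)^2\bigl[(1-ac-bd)^2+(ad-bc)^2\bigr]\leq\bigl[(a-c)^2+(b-d)^2\bigr](1-ac)^2.
\]
A direct expansion gives
\[
(1-ac-bd)^2+(ad-bc)^2=(1-ac)^2-2bd+a^2d^2+b^2c^2+b^2d^2,
\]
so after subtracting $(a-c)^2(1-ac)^2$ from both sides the inequality simplifies to
\[
(a-c)^2\bigl[a^2d^2+b^2c^2+b^2d^2-2bd\bigr]\leq(b-d)^2(1-ac)^2.
\]

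The proof then rests on two elementary observations that I would multiply together. First, the identity $(1-ac)^2-(a-c)^2=(1-a^2)(1-c^2)\geq 0$ gives $(a-c)^2\leq(1-ac)^2$. Second, because $|s|<1$ and $|t|<1$ we have $1-a^2>0$ and $1-|t|^2>0$, and a short calculation shows
\[
(b-d)^2-\bigl[a^2d^2+b^2c^2+b^2d^2-2bd\bigr]=b^2(1-|t|^2)+d^2(1-a^2)\geq 0,
\]
a sum of two nonnegative terms. Combining the two estimates yields the required bound. The only real obstacle is the careful algebraic bookkeeping in the expansion; no additional idea beyond Lemma \ref{Th:dist} and these two positivity facts is needed.
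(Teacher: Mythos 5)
Your proof is correct, and it diverges from the paper's in its second half. The first reduction is identical: both you and the paper apply Lemma \ref{Th:dist} with $t_1=Re(s)$, $t_2=Re(t)$, $k_1=k$, $k_2=1$ to strip off the factor $k$. For the remaining inequality $\bigl|\frac{Re(s)-Re(t)}{1-Re(s)Re(t)}\bigr|\leq\bigl|\frac{t-s}{1-s\bar t}\bigr|$, the paper simply cites Lemma 6.4 on page 75 of Goebel--Reich, which states that $z\mapsto Re(z)=\frac{z+\bar z}{2}$ is a contraction for the hyperbolic metric, i.e.\ $d_H(Re(t),Re(s))\leq d_H(t,s)$. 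You instead verify this by direct expansion in $a,b,c,d$, and your algebra checks out: the cross-multiplied inequality does reduce to $(a-c)^2\bigl[a^2d^2+b^2c^2+b^2d^2-2bd\bigr]\leq(b-d)^2(1-ac)^2$, and your two positivity facts $(1-ac)^2-(a-c)^2=(1-a^2)(1-c^2)\geq0$ and $(b-d)^2-\bigl[a^2d^2+b^2c^2+b^2d^2-2bd\bigr]=b^2(1-|t|^2)+d^2(1-a^2)\geq0$ are both correct. The only point worth making explicit is that the bracketed quantity $a^2d^2+b^2c^2+b^2d^2-2bd$ can be negative, so when you ``multiply the two estimates'' you should note the standard case split: if it is $\leq0$ the left side is nonpositive and the right side is nonnegative; if it is $>0$ you may chain $(a-c)^2 U\leq(1-ac)^2U\leq(1-ac)^2(b-d)^2$. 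With that remark your argument is complete and fully self-contained, which is what it buys over the paper's version: no appeal to an external reference, at the cost of a page of bookkeeping that the citation avoids.
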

\begin{proof}
By Lemma \ref{Th:dist}, we have
\begin{equation*}
\biggl|\frac{(Re(s)-Re(t))k}{1-Re(s)\overline{Re(t)}k^2}\biggr|\leq
\biggl|\frac{Re(s)-Re(t)}{1-Re(s)\overline{Re(t)}}\biggr|.
\end{equation*}
On the other hand,
  Lemma 6.4 on page 75 of \cite{GR} indicates that
\begin{equation*}d_H(Re(t),Re(s))=d_H(\frac{t+\bar t}{2},\frac{s+\bar s}{2})\leq
d_H(t,s),
\end{equation*} which implies that
\begin{equation*}
\biggl|\frac{Re(s)-Re(t)}{1-Re(s)\overline{Re(t)}}\biggr|\leq
\biggl|\frac{t-s}{1-s\bar t}\biggr|.
\end{equation*}
\end{proof}

\textbf{Proof of Theorem \ref{Th:disks}.}  It suffices to prove that for any $\mut$ ($\neq \oo$) in $\atde$, there
are infinitely many geodesic disks passing through $\mut$ and $\oo$.
Choose a non-Strebel extremal representative in $\mut$, say $\mu$.
 Then $k_0(\emu)=h(\mut)=h^*(\mu):=h$.

\textit{Case 1.} $\mut$ is not a substantial point.

There is a point $q\in
\pa \de $ which is  not a substantial boundary point for $\mut$. By Lemma
\ref{Th:asynu}, we may assume  $h^*_q(\mu)<h$ in addition.

 By  the definition of boundary dilatation,
we can find a small neighborhood $B(q)$  of $q$ in $\de$ such that
$|\mu(z)|\leq \rho<h$ for some $\rho>0$ in $B(q)$ almost everywhere.
Therefore for any
 $\zeta\in \pa \de\cap  \pa B(q)$, $h_\zeta^*(\mu)\leq \rho$.

Choose  $\delta(z)\in
M(\de)$
 such that  $\|\delta\|_\infty\leq \beta<h-\rho$ and $\delta(z)=0$ when $z\in \de\backslash B(q)$.

Let $\Sigma$ be the collection of the complex-valued functions $\sigma$ defined on $\de$ with the following conditions:
\\
(A)  $\sigma$ is  continuous with  $\sigma(0)=0$ and $\sigma(h)=0$,\\
(B) $\left| \frac{|(s-t)\mu(z)/h|+|\sigma(s)-\sigma(t)|\beta}{1-[s\mu(z)/h+\sigma(s)\delta(z)][\overline{t\mu(z)/h+\sigma(t)\delta(z)}]}\right|
\leq\left|\frac{s-t}{1-s\bar t}\right|$,\;
$\; t,\;s\in \de$, $z\in B(q)$.

We claim that $\Sigma$ contains uncountably many  elements. At first, let $\sigma$ be a Lipschitz continuous function on
$\de$ with the following conditions,\\
(i) for some small $\alpha>0$, $|\sigma(s)-\sigma(t)|<\alpha |s-t|$, $t,\;s\in \de$,\\
(ii) $\sigma(0)=0$ and $\sigma(h)=0$,\\
(iii) for some small $t_0$ in $(0,h)$,  $\sigma(t)\equiv0$ when $|t|\geq t_0$,

Secondly, we show that when $t_0$ and $\alpha$ are sufficiently small, $\sigma$ belongs to $\Sigma$, for which it suffices to show that $\sigma$ satisfies the condition (B). Let $t,\;s\in \de$. It is no harm to assume that $|t|\leq |s|$.

Case 1. $|t|\geq t_0$.

Since $\sigma(s)=\sigma(t)=0$, by Lemma \ref{Th:dist}, we have
\begin{align*}
&\left|\frac{|(s-t)\mu(z)/h|+|\sigma(t)-\sigma(s)|\beta}{1-[s\mu(z)/h+\sigma(s)\delta(z)][\overline{t\mu(z)/h+\sigma(t)\delta(z)}]}\right|
=\left|\frac{(s-t)\mu(z)/h}{1-[s\mu(z)/h][\overline{t\mu(z)/h}]}\right|\\
&\leq\left|\frac{s-t}{1-s\bar t}\right|,  \; z\in B(q).
\end{align*}

Case 2. $|t|< t_0$.

Put $\gamma=\rho/h+\alpha\beta$ and choose small $\alpha>0$ such that  $\gamma<1$.  On the one hand, since $|\sigma(t)|\leq \alpha |t|$ and $|\sigma(s)|\leq \alpha |s|$, it holds that
\begin{align*}
&\left|\frac{|(s-t)\mu(z)/h|+|\sigma(t)-\sigma(s)|\beta}{1-[s\mu(z)/h+\sigma(s)\delta(z)][\overline{t\mu(z)/h+\sigma(t)\delta(z)}]}\right|
\leq \left|\frac{(s-t)(\rho/h+\alpha\beta)}{1-[|s|\rho/h+\alpha|s|\beta][|t|\rho/h+\alpha |t|\beta]}\right|\\
&\leq \left|\frac{(s-t)(\rho/h+\alpha\beta)}{1-[\rho/h+\alpha\beta][t_0(\rho/h+\alpha\beta)]}\right|
=\gamma\left|\frac{s-t}{1-t_0\gamma^2}\right|,\; z\in B(q).
\end{align*}
On the other hand, we have
\begin{align*}
\left|\frac{s-t}{1-s\bar t}\right|\geq\left|\frac{s-t}{1+t_0}\right|.
\end{align*}
When $t_0$ is sufficiently small, we can get
\begin{align*}
\left|\frac{s-t}{1+t_0}\right|\geq \gamma\left|\frac{s-t}{1-t_0\gamma^2}\right|.
\end{align*}
Therefore, when $t_0$ and $\alpha$ are sufficiently small, $\sigma$ satisfies the condition (B).

For a given $\sigma\in \Sigma$, define for $t\in \de$,
 \begin{equation}\label{Eq:nonsub}
 \mu_t(z)=
 \begin{cases}
t\mu(z)/h,&z\in \de\backslash B(q),\\
t\mu(z)/h+\sigma(t)\delta(z),&z\in  B(q),
 \end{cases}
 \end{equation}
and the map
\begin{align*}\Psi_{\sigma}:\ &\md \hookrightarrow \atde, \\
&t\longmapsto [[\mu_t]].
\end{align*}
It is obvious that $\Psi_{\sigma}(\de)$ contains $\mut$ and the basepoint $\oo$.  We show that  $\Psi_{\sigma}$ is an isometric
embedding.
 It is sufficient to verify
that
\begin{equation}\label{Eq:sst}
d_{AT}([[\mu_t]],[[\mu_s]])=d_H(t,s),\; t,\,s\in \de,
\end{equation}
where $d_H$ expresses the hyperbolic distance on the unit disk.

Let $f_s:\, \de\to \de$ and $f_t:\, \de\to \de$ be \qc
mappings with Beltrami differentials $\mu_s$ and $\mu_t$
respectively. It is convenient to assume that $t\neq 0$ and $s\neq t$. Set $F_{s,t}=f_s\circ f_t^{-1}$ and assume that the
Beltrami differential of $F_{s,t}$ is $\nu_{s,t}$. Then a simple
computation shows,
\begin{equation*}
\nu_{s,t}\circ
f_t(z)=\frac{1}{\tau}\frac{\mu_s(z)-\mu_t(z)}{1-\ov{\mu_t(z)}\mu_s(z)},
\end{equation*}
where $z=f_t^{-1}(w)$ for $w\in  \de$ and $\tau=\ov{\partial
f_t}/\partial f_t$. We have

\begin{equation}\label{Eq:ft1}
\nu_{s,t}\circ
f_t(z)=\begin{cases}
\frac{1}{\tau}\frac{s-t}{1-  s\bar t|\mu(z)|^2/h^2 }\frac{\mu(z)}{h},\,&z\in \de\backslash B(q),\\
\frac{1}{\tau}\frac{(s-t)\mu(z)/h+[\sigma(s)-\sigma(t)]\delta(z)}{1-[s\mu(z)/h+\sigma(s)\delta(z)]
\overline{t\mu(z)/h+\sigma(t)\delta(z)}},\, &z\in B(q).
\end{cases}
\end{equation}
Since $\sigma\in \Sigma$, due to condition (B) we see that
restricted on $f_t(B(q))$,
\begin{equation}\label{Eq:vvst}
\|\nu_{s,t}\|_\infty\leq\left|\frac{s-t}{1-s\bar  t}\right|.
\end{equation}

Suppose $p\in \pa \de$ is a substantial boundary point for $\mut$. By
Lemma \ref{Th:atslem1} there is a degenerating Hamilton sequence
$\{\psi_n\}\subset Q^1(\de)$ towards $p$ such that
\begin{align*}
h=\limn\iint_\de \mu(z)\psi_n(z)dxdy.
\end{align*}
Then, we have
\begin{align*}\label{Eq:qs11}
|t|=\limn\iint_\de \mu_t(z)e^{-iarg t}\psi_n(z)dxdy.
\end{align*}
On the other hand, it is easy to see that
$h([[\mu_t]])=h^*(\mu_t)=|t|$ and hence $\mu_t$ is an asymptotic
extremal.
   Therefore, the Beltrami differential $\wt \mu_t$ of  $f^{-1}_t$ is also an asymptotic extremal
where ${\wt\mu}_t=-\mu_t({f}_t^{-1})\ov{\partial
{f}_t^{-1}}/\partial {f}_t^{-1}$. $f_t(p)$ is a
substantial boundary point for $[[\wt\mu_t]]$ and there is a
degenerating Hamilton sequence $\{\wt\psi_n\}\subset Q^1(\de)$
towards $f_t(p)$ such that
\begin{equation*}
\lim_{n\to\infty}Re\iint_{\de} \wt\mu_t\wt\psi_n(w)dudv=h([[\wt\mu_t]])=|t|.
\end{equation*}
 Furthermore,
\begin{equation}\label{Eq:vst}\begin{split}
&\lim_{n\to\infty}Re\iint_{\de}
\nu_{s,t}(w)e^{i(\theta+\arg t)}\wt\psi_n(w)dudv\\
&=\lim_{n\to\infty}Re\iint_{\de}\frac{s-t}{1-s\bar  t}
\frac{{\wt\mu}_t}{t}e^{i(\theta+\arg t)}\wt\psi_n(w)dudv
=\left|\frac{s-t}{1-s\bar  t}\right|,
\end{split}
\end{equation}
where $\theta=-\arg \frac{s-t}{1-s\bar  t}$.

 Thus, by (\ref{Eq:vvst}),
(\ref{Eq:vst}) and Lemma \ref{Th:atslem1}, it follows  that
$h([[\nu_{s,t}]])=\left|\frac{s-t}{1-s\bar  t}\right|$, $\nu_{s,t}$ is asymptotically
extremal and the equality (\ref{Eq:sst}) holds.

It remains to show that there are infinitely many geodesic disks passing through $\mut$ and $[[0]]$
when  $\sigma$ varies over $\Sigma$ and $\delta(z)$ varies
over $ M(\de)$ suitably, respectively.

Firstly, choose $\delta(z)$  in $M(\de)$ such that \begin{equation}\label{Eq:lakic}
\sup_{\qdde}\limsup_{n\to\infty} Re\iint_\de\delta\vp_n \, dxdy=c>0,
\end{equation}
where the  $supremum$ is over all sequences   $\{\vp_n\}$ in $\qdde$
degenerating towards $q$.

Secondly,  we  choose small $t_0$ in $(0,h)$, small $\alpha>0$   and $\sigma\in \Sigma$ such that $\sigma(t)\equiv0$ whenever $|t|\geq t_0$  and
 $\sigma(t)=\alpha t$  when $t\in [0,t_0/2]$.

\textit{Claim.} When  $\alpha$ varies in a small range,
the   geodesic disks $\Psi_{\sigma}(\de)$  are mutually different.

Let $\alpha_1$ and $\alpha_2$ be two small different positive numbers and
$\sigma_j(t)=\alpha_j t$  when $t\in [0,t_0]$ ($j=1,2$), respectively.
Now, the corresponding expression of equation (\ref{Eq:nonsub}) is
 \begin{equation*}\mu_t^j(z)=
 \begin{cases}
t\mu(z)/h,&z\in \de\backslash B(q),\\
t\mu(z)/h+\sigma_j(t)\delta(z),&z\in B(q),\; j=1,2.
 \end{cases}
 \end{equation*}

When $t\in [0,h]$, they correspond to
geodesics $G_j=\{[[\mu^j_t]]:t\in [0,h]\}$ ($j=1,2$), respectively.
Note that when $t\in [0,t_0/2]$,
 \begin{equation*}\mu_t^j(z)=
 \begin{cases}
t\mu(z)/h,&z\in \de\backslash B(q),\\
t\mu(z)/h+t\alpha_j\delta(z),&z\in B(q),\; j=1,2.
 \end{cases}
 \end{equation*}
 Define \begin{equation*}\mu^j(z)=
 \begin{cases}
\mu(z)/h,&z\in \de\backslash B(q),\\
\mu(z)/h+\alpha_j\delta(z),&z\in B(q),\; j=1,2.
 \end{cases}
 \end{equation*}

Since
\begin{align*}\sup_{\qdde}&\limsup_{n\to\infty} Re\iint_\de(\mu^1-\mu^2)\vp_n \, dxdy=
\sup_{\qdde}\limsup_{n\to\infty} Re\iint_\de(\alpha_1-\alpha_2)\delta\vp_n \, dxdy\\
\geq &|\alpha_1-\alpha_2|c>0,
\end{align*}
by Theorem \ref{Th:biinequ}, the
geodesics $G_1$ and $G_2$ are obviously different and hence the geodesic disks $\Psi_{\sigma_1}(\de)$ and $\Psi_{\sigma_2}(\de)$   are  different.

 If fix small $\alpha>0$ and let $\delta$ vary suitably in $M(\de)$, then we can also get infinitely many geodesic disks as desired.

\textit{Case 2.} $\mut$ is a substantial point.

Fix a boundary point
$p\in \pa \de$. Let  $B(p)=\{z\in \de:\;|z-p|<r$\} for small $r>0$ and  $E=\de\backslash B(p)$. Define for $t\in\de$
\begin{equation}\label{Eq:geodisk1}
\mu_t(z):=\begin{cases}
t\mu(z)/h,&z\in B(p),\\
Re(t)\mu(z)/h,& z\in E.
\end{cases}
\end{equation}
and the map
\begin{align*}\Psi_{E}:\ &\de \hookrightarrow \atde, \\
&t\longmapsto [[\mu_t]].
\end{align*}
We claim that  $\Psi_{E}$ is an isometric
embedding.
It suffices to check the equality:
\begin{equation}\label{Eq:stdat}
\dat([[\mu_t]],[[\mu_s]])=d_H(t,s),\;t,\,s\in \de.\end{equation}
 Using  the previous
notation, we have
\begin{equation}\label{Eq:atsft1}
\nu_{s,t}\circ f_t(z)=\begin{cases}
\frac{1}{\tau}\frac{s-t}{1-  s\bar t|\mu(z)|^2/h^2 }\frac{\mu(z)}{h},\,&z\in B(p),\\
\frac{1}{\tau}\frac{Re(s)-Re(t)}{1-
Re(s)\overline{Re(t)}|\mu(z)|^2/h^2}\frac{\mu(z)}{h},\, &z\in E.
\end{cases}
\end{equation}
Firstly, it derives readily that restricted on $f_t(B(p))$,
\begin{equation}\label{Eq:vvst1}
\|\nu_{s,t}\|_\infty=\biggl|\frac{s-t}{1-s\bar t}\biggr|.
\end{equation}
Secondly, Lemma \ref{Th:ret} indicates that restricted on $f_t(E)$,
\begin{equation*}
\|\nu_{s,t}\|_\infty\leq\biggl|\frac{s-t}{1-s\bar t}\biggr|.
\end{equation*}
 By a similar argument to Case 1, we can get
 \begin{equation*}
h([[\nu_{s,t}]])=k_0([\nu_{s,t}])=\biggl|\frac{s-t}{1-s\bar
t}\biggr|.
\end{equation*}
(\ref{Eq:stdat}) follows immediately.

It remains to show that when $r$ varies in a suitable range, equivalently, when $E$ varies, the geodesic disks $\Psi_E(\de)$ are mutually different. Fix $t=\lambda i$ where $\lambda\in (0,1).$
Since $Re(t)=0$, $\mu_t(z)=0$ on $E$. Therefore, none of inner points in the arc $\pa\de\cap \pa E$  is a substantial boundary points for $[[\mu_t]]$ but  all  points in the arc $\pa\de\cap \pa B(p)$ are  substantial boundary points. Therefore, when $r$ varies, $[[\mu_t]]$ are mutually different. Thus,  we get
infinitely many geodesic disks as required. Actually, in such a case, these geodesic disks  contain the straight line
$\{[[t\mu/h]]:\,t\in (-1,1)\}.$

The proof of two cases above gives the following corollaries respectively.

\begin{cor}\label{Th:geoats1}
Suppose $\mut$ is not a substantial point in $\atde$.  Then there are infinitely many
 geodesics connecting $\mut$ to the
basepoint $\oo$.
\end{cor}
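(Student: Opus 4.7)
The plan is to derive this corollary directly from the construction in Case 1 of the proof of Theorem \ref{Th:disks}, which is run under exactly the hypothesis that $\mut$ is non-substantial. The key observation is that an isometric embedding of the hyperbolic disk restricts to a geodesic segment on any diameter, so the family of geodesic disks already produced in that case yields, by taking their real diameters, a family of geodesic segments joining $\oo$ to $\mut$; all that remains is to show that the family is genuinely infinite.

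First I would invoke the output of Case 1: for each admissible $\sigma \in \Sigma$ (in the notation of that proof), there is an isometric embedding $\Psi_\sigma: \de \hookrightarrow \atde$ with $\Psi_\sigma(0) = \oo$ and $\Psi_\sigma(h) = \mut$. Restricting $\Psi_\sigma$ to the real interval $[0,h]$ produces a geodesic segment $G_\sigma = \{[[\mu_t]] : t \in [0,h]\}$ from $\oo$ to $\mut$ in $(\atde, d_{AT})$. Then I would specialize $\sigma$ to a one-parameter family of Lipschitz bumps $\sigma_\alpha$ with $\sigma_\alpha(t) = \alpha t$ on $[0, t_0/2]$ and $\sigma_\alpha \equiv 0$ outside $\{|t| \leq t_0\}$, for $\alpha$ ranging over a small interval $(0, \alpha_0)$. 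The Beltrami differential $\delta$ is chosen, as in Case 1, so that
\[
\sup_{\qdde}\limsup_{n \to \infty} Re \iint_\de \delta \, \vp_n \, dxdy = c > 0
\]
along sequences degenerating toward $q$, where $q$ is a boundary point that is not substantial for $\mut$.

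Finally, I would separate distinct members of the family. If $\alpha_1 \neq \alpha_2$ both lie in $(0, \alpha_0)$ but produced the same geodesic, then $[[\mu_t^{\alpha_1}]] = [[\mu_t^{\alpha_2}]]$ for all $t \in [0, t_0/2]$, whence dividing the Beltrami differences by $t$ and applying Theorem \ref{Th:biinequ} would force
\[
\sup_{\qdde}\limsup_{n \to \infty} Re \iint_\de (\alpha_1 - \alpha_2)\, \delta \, \vp_n \, dxdy = 0,
\]
in contradiction with $|\alpha_1 - \alpha_2|\, c > 0$. The main obstacle is essentially folded into the proof of Theorem \ref{Th:disks}: the delicate verification that each $\Psi_\sigma$ is an isometric embedding of $(\de, d_H)$ into $(\atde, d_{AT})$ has already been done there; here one only repackages the real-diameter slice of each embedding as a geodesic segment and uses the Theorem \ref{Th:biinequ} argument, already present in that proof, to rule out coincidences among the $G_{\sigma_\alpha}$.
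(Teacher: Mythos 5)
Your proposal is correct and takes essentially the same route as the paper: the author derives this corollary precisely from Case 1 of the proof of Theorem \ref{Th:disks}, restricting each isometric embedding $\Psi_\sigma$ to the real segment $[0,h]$ to obtain the geodesics $G_j=\{[[\mu^j_t]]:t\in[0,h]\}$ and then separating them, exactly as you do, via Theorem \ref{Th:biinequ} together with the choice of $\delta$ satisfying (\ref{Eq:lakic}).
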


\begin{cor}\label{Th:straight}
Suppose that $\mut$ ($\neq \oo$) is  a substantial point in $\atde$ and $\mu$ is a non-Strebel extremal representative.  Then there are infinitely many
 geodesic disks containing the straight line
$\{[[t\mu/k]]:\,t\in (-1,1)\},$ where $k=\|\mu\|_\infty\in (0,1)$.
\end{cor}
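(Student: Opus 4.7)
\textbf{Proof plan for Corollary \ref{Th:straight}.}

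The plan is to reuse verbatim the construction from Case 2 in the proof of Theorem \ref{Th:disks} and observe that it automatically yields a family of geodesic disks sharing a common diameter. Since $\mu$ is non-Strebel extremal with $\|\mu\|_\infty=k$, we have $h=h(\mut)=h^*(\mu)=k_0(\emu)=k$, so the normalization $\mu/h$ in \eqref{Eq:geodisk1} coincides with $\mu/k$. Fix an arbitrary boundary point $p\in\partial\de$ and, for each sufficiently small $r>0$, set $B_r(p)=\{z\in\de:|z-p|<r\}$ and $E_r=\de\setminus B_r(p)$. Define
\begin{equation*}
\mu_t^{(r)}(z)=\begin{cases} t\mu(z)/k, & z\in B_r(p),\\ Re(t)\,\mu(z)/k, & z\in E_r,\end{cases}
\end{equation*}
and the map $\Psi_{E_r}: t\mapsto [[\mu_t^{(r)}]]$ from $\de$ into $\atde$.

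First I would note that when $t\in(-1,1)\subset\mr$, one has $Re(t)=t$, so $\mu_t^{(r)}\equiv t\mu/k$ on all of $\de$, independently of $r$. Consequently, for every admissible $r$, the straight line $\{[[t\mu/k]]:t\in(-1,1)\}$ lies in $\Psi_{E_r}(\de)$. Thus all the geodesic disks produced by this construction contain the desired common straight line.

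Next I would verify that each $\Psi_{E_r}$ is an isometric embedding, which is exactly the content of the computation leading to \eqref{Eq:stdat}: combining \eqref{Eq:atsft1}, \eqref{Eq:vvst1}, Lemma \ref{Th:ret} to bound $\|\nu_{s,t}\|_\infty$ on $f_t(E_r)$ by $|(s-t)/(1-s\bar t)|$, and then using that $p$ is a substantial boundary point for $\mut$ (and hence for $[[\mu_t^{(r)}]]$, by the Hamilton-sequence argument already given) together with Lemma \ref{Th:atslem1} to show that $h([[\nu_{s,t}]])$ attains the value $|(s-t)/(1-s\bar t)|$ via a sequence degenerating into $B_r(p)$. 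No new estimates are needed beyond those in Case 2.

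Finally I would show that the disks $\Psi_{E_r}(\de)$ are mutually distinct as $r$ ranges over a suitable interval. The argument is identical to the one at the end of Case 2: pick a purely imaginary $t=\lambda i$ with $\lambda\in(0,1)$, so $Re(t)=0$ and $\mu_t^{(r)}$ vanishes on $E_r$. Then the substantial boundary points of $[[\mu_t^{(r)}]]$ lie on the closed arc $\partial\de\cap\overline{B_r(p)}$ and not in the interior of $\partial\de\cap\partial E_r$, so the classes $[[\mu_t^{(r)}]]$, and hence the disks $\Psi_{E_r}(\de)$, vary with $r$. Consequently infinitely many distinct geodesic disks all contain the straight line $\{[[t\mu/k]]:t\in(-1,1)\}$. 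The main subtlety — verifying the isometric embedding property under the non-smooth piecewise definition of $\mu_t^{(r)}$ — is already handled by Case 2, so the proof reduces to the observation about real parameters.
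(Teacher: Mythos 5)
Your proposal is correct and follows essentially the same route as the paper: the paper derives Corollary \ref{Th:straight} directly from Case 2 of the proof of Theorem \ref{Th:disks}, using exactly your observation that for real $t$ the differential $\mu_t$ reduces to $t\mu/k$ on all of $\de$ independently of $r$, so every disk $\Psi_{E}(\de)$ contains the common straight line, while the disks are distinguished by the substantial-boundary-point argument at $t=\lambda i$.
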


\begin{centering}\section{\!\!\!\!\!{. }Geodesic planes in the tangent space}\label{S:plane}\end{centering}

A geodesic plane in $\azs$ is the image of a map $\Phi:
\mathbb{C}\to \azs$ which is an isometry with respect to the
Euclidean metric on $\mathbb{C}$ and the infinitesimal asymptotic
metric on $\azs$, respectively.

The infinitesimal version of Theorem
\ref{Th:disks} is as follows.
\begin{theorem}\label{Th:infdisks}
In the tangent space $\azde$, there are always  infinitely many
geodesic planes containing   two points.
\end{theorem}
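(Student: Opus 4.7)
The plan is to mirror the two-case construction in the proof of Theorem \ref{Th:disks}, replacing the unit disk of parameters by $\mc$, the hyperbolic metric by the Euclidean metric, and the asymptotic \T metric by the infinitesimal asymptotic norm on $\azde$. The infinitesimal setting is actually cleaner than the global one: the Reich--Strebel composition arguments and the multiplicative condition (B) used to define $\Sigma$ are replaced by direct linear $L^\infty$ estimates, while the Hamilton-sequence portion of the argument goes through verbatim. As in the global case, it suffices to produce infinitely many geodesic planes through $\oaz$ and a given $\azmu\neq\oaz$, since $\azde$ is a Banach space and translation by a fixed Beltrami class is an isometry.

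Given $\azmu$, I first use Lemma \ref{Th:nonstrebel}(2) to pick a non-Strebel extremal representative $\mu$ with $\nmu=b(\azmu)=:b>0$, and invoke Lakic's identity (stated just before Lemma \ref{Th:atslem1}) to fix some substantial boundary point $p\in\pa\de$ for $\azmu$. If $\azmu$ is non-substantial, select a non-substantial $q\in\pa\de$ with $b^*_q(\mu)<b$ (Lemma \ref{Th:asynu}(2)), take a small neighborhood $B(q)$ on which $|\mu|\le\rho<b$, chosen small enough that $p$ lies outside $\overline{B(q)}$, and pick $\delta\in Bel(\de)$ supported in $B(q)$ with $\|\delta\|_\infty=\beta$ small and $\sup_{\{\vp_n\}}\limsup_{n\to\infty}Re\iint_\de\delta\vp_n\,dxdy=c>0$ over $\{\vp_n\}\in\qdde$ degenerating to $q$. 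For each Lipschitz $\sigma:\mc\to\mc$ with $\sigma(0)=0$, $\sigma(t)\equiv 0$ for $|t|\ge t_0$ (some $t_0<b$), and Lipschitz constant $\alpha$ so small that $\rho/b+\alpha\beta\le 1$, define
$$\mu_t(z)=\begin{cases}t\mu(z)/b,&z\in\de\backslash B(q),\\ t\mu(z)/b+\sigma(t)\delta(z),&z\in B(q),\end{cases}\qquad \Phi_\sigma(t):=[[\mu_t]]_{AZ}.$$
If $\azmu$ is substantial, fix $p\in\pa\de$ and $r>0$, put $B(p)=\{|z-p|<r\}$, $E=\de\backslash B(p)$, and take $\mu_t=t\mu/b$ on $B(p)$ and $\mu_t=Re(t)\mu/b$ on $E$, with $\Phi_E(t):=[[\mu_t]]_{AZ}$. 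In either case $\Phi(0)=\oaz$ and $\Phi(b)=\azmu$.

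To verify isometry $\|[[\mu_t-\mu_s]]_{AZ}\|=|t-s|$, the upper bound $\|\mu_t-\mu_s\|_\infty\le|t-s|$ is immediate from the construction (using $|Re(t)-Re(s)|\le|t-s|$ in Case 2, and $\rho/b+\alpha\beta\le 1$ in Case 1). For the lower bound, take an asymptotic Hamilton sequence $\{\psi_n\}\subset Q^1(\de)$ for $\mu$ degenerating to $p$ (Lemma \ref{Th:atslem1}); since $p$ lies outside $\overline{B(q)}$ in Case 1 and inside $\overline{B(p)}$ in Case 2, eventually $\mu_t-\mu_s=(t-s)\mu/b$ on the effective support of $\psi_n$, so
$$\lim_{n\to\infty}Re\iint_{\de}(\mu_t-\mu_s)e^{-i\arg(t-s)}\psi_n\,dxdy=|t-s|,$$
confirming that $\Phi$ is an isometric embedding of $(\mc,|\cdot|)$ into $(\azde,\|\cdot\|)$.

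Finally, to produce infinitely many distinct planes I vary the parameter. In Case 1, taking $\sigma_j(t)=\alpha_jt$ near $0$ with distinct small $\alpha_j$ yields $\mu_t^1-\mu_t^2=(\alpha_1-\alpha_2)t\delta$ on $B(q)$ for small $t$, and the defining property of $\delta$ gives $\|[[\mu_t^1-\mu_t^2]]_{AZ}\|\ge c|\alpha_1-\alpha_2||t|>0$ for $t\ne 0$. In Case 2 I vary $r$: for purely imaginary $t\ne 0$ one has $\mu_t\equiv 0$ on $E$, so the substantial boundary structure of $[[\mu_t]]_{AZ}$ detects the arc $\pa\de\cap\pa B(p)$, and different $r$ give mutually distinct classes, exactly as in Section \ref{S:disk}. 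The main technical point is arranging in Case 1 that the substantial point $p$ lies strictly outside $\overline{B(q)}$ so the Hamilton sequence near $p$ meets only the unperturbed term $(t-s)\mu/b$; once this is in place, the remainder is a direct infinitesimal specialization of the arguments already developed.
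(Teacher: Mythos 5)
Your proposal is correct and follows essentially the same route as the paper: the same two-case split on whether $\azmu$ is an (infinitesimal) substantial point, the same perturbed families $\mu_t$ (a $\sigma(t)\delta$ term supported near a non-substantial point $q$ in Case 1, the $t\mu$ versus $Re(t)\mu$ splitting across $B(p)$ and $E$ in Case 2), the same isometry verification via the trivial $L^\infty$ upper bound plus a Hamilton sequence degenerating toward a substantial point $p$, and the same mechanisms for distinguishing the planes. Your explicit remark that $p$ necessarily lies outside $\overline{B(q)}$ (so the degenerating sequence only meets the unperturbed term) is a detail the paper leaves implicit, but it is not a departure from its argument.
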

\begin{proof}
It suffices to prove that for any $\azmu$ ($\neq \oaz$) in $\azde$, there
are infinitely many geodesic planes passing through $\azmu$ and $\oaz$.
By Lemma \ref{Th:nonstrebel}, we  can choose a non-Strebel extremal representative in $\azmu$, say $\mu$. Since $\azde$ is a Banach space, without loss of generality it is convenient to assume that
$\|\mu\|_\infty=b^*(\mu)=b(\azmu)=1$.

\textit{Case 1.} $\azmu$ is not an infinitesimal substantial point.

Suppose $q\in
\pa \de $ is not a substantial boundary point for $\azmu$. By Lemma
\ref{Th:asynu}, we may assume  $b^*_q(\mu)<1$ in addition.

 By  the definition of boundary dilatation,
we can find a small neighborhood $B(q)$  of $q$ in $\de$ such that
$|\mu(z)|\leq \rho<1$ for some $\rho>0$ in $B(q)$ almost everywhere.
Therefore for any
 $\zeta\in \pa \de\cap \pa B(q)$, $b_\zeta^*(\mu)\leq \rho$.

 Choose  $\delta(z)\in
M(\de)$
 such that  $\|\delta\|_\infty\leq \beta<1-\rho$ and $\delta(z)=0$ when $z\in \de\backslash B(q)$.

Let $\Sigma'$ be the collection of the complex-valued functions $\sigma$ defined on $\mc$ with the following conditions:
\\
(A)  $\sigma$ is  continuous with  $\sigma(0)=0$ and $\sigma(1)=0$,\\
(B) $|s-t|\rho+|\sigma(t)-\sigma(s)|\beta
\leq|s-t|,$ $t,\;s\in \mc$, $z\in B(q)$.

Since $\rho<1$ and $\beta <1-\rho$,  $\Sigma'$ contains uncountably many  elements. In fact, if $\sigma$ is a Lipschitz continuous function on $\mc$ with the following conditions,\\
(i) for some small $\alpha>0$, $|\sigma(s)-\sigma(t)|<\alpha |s-t|$, $t,\;s\in \mc$,\\
(ii) $\sigma(0)=0$ and $\sigma(1)=0$,\\
(iii)  $\rho+\alpha \beta<1$,\\
then $\sigma\in \Sigma'$.

Given $\sigma\in \Sigma'$,
define for $t\in \mc$,
 \begin{equation}\label{Eq:infnonsub}
 \mu_t(z)=
 \begin{cases}
t\mu(z),&z\in \de\backslash B(q),\\
t\mu(z)+\sigma(t)\delta(z),&z\in  B(q),
 \end{cases}
 \end{equation}
and the map
\begin{align*}\Phi_{\sigma}:\ &\mc \hookrightarrow \azde, \\
&t\longmapsto [[\mu_t]]_{AZ}.
\end{align*}
It is obvious that $\Phi_{\sigma}(\mc)$ contains $\azmu$ and the basepoint $\oaz$.  We show that  $\Phi_{\sigma}$ is an isometric
embedding.
 It is sufficient to verify
that
\begin{equation}\label{Eq:infsst}
\|[[\mu_s-\mu_t]]_{AZ}\|=|s-t|,\; t,\,s\in \mc.
\end{equation}
At first, it is obvious that
\begin{align*}
\|\mu_s-\mu_t\|_\infty=|s-t|,
\end{align*}

Suppose $p\in \pa \de$ is a substantial boundary point for $\azmu$. By
Lemma \ref{Th:atslem1} there is a degenerating Hamilton sequence
$\{\psi_n\}\subset Q^1(\de)$ towards $p$ such that
\begin{align*}
1=\limn\iint_\de \mu(z)\psi_n(z)dxdy.
\end{align*}
Therefore, we have
\begin{align*}
|s-t|=\limn\iint_\de [\mu_s(z)-\mu_t(z)]e^{-iarg(s-t)}\psi_n(z)dxdy, \;s\neq t,
\end{align*}
which implies the equality (\ref{Eq:infsst}).

It remains to show that there are infinitely many geodesic planes passing through $\azmu$ and $\oaz$
when  $\sigma$ varies over $\Sigma'$ and $\delta(z)$ varies
over $ M(\de)$ suitably, respectively.

Choose $\delta(z)$  in $M(\de)$ such that (\ref{Eq:lakic}) holds.
Fix a  small $t_0$ in $(0,1)$.  Choose $\sigma\in \Sigma'$ such that $\sigma(t)\equiv0$ whenever $|t|\geq t_0$  and
 $\sigma(t)=\alpha t$  when $t\in [0,t_0/2]$ where $\alpha>0$ satisfying  $\rho+\alpha \beta<1$.
Note that when $t\in [0,t_0/2]$,
 \begin{equation*}\mu_t(z)=
 \begin{cases}
t\mu(z)/h,&z\in \de\backslash B(q),\\
t\mu(z)/h+t\alpha\delta(z),&z\in B(q).
 \end{cases}
 \end{equation*}

The geodesic planes $\Phi_{\sigma}$ contain the geodesics
 $G_\alpha=\{[[\mu_t]]:t\in [0,1]\}$ respectively.

Due to the equality (\ref{Eq:lakic}),
the   geodesics $G_\alpha$  are mutually different when $\alpha$ varies in a small range. Therefore, the geodesic planes $\Phi_{\sigma}(\mc)$ are mutually different.

If fix small $\alpha>0$ and let $\delta$ vary suitably in $M(\de)$, then we can also get infinitely many geodesic planes as required.

\textit{Case 2.} $\azmu$ is a substantial point.

 Fix a boundary point
$p\in \pa \de$. Let  $B(p)=\{z\in \de:\;|z-p|<r\}$ for small $r>0$ and  $E=\de\backslash B(p)$. Define for $t\in\mc$
\begin{equation}\label{Eq:infgeodisk1}
\mu_t(z):=\begin{cases}
t\mu(z),&z\in B(p),\\
Re(t)\mu(z),& z\in E.
\end{cases}
\end{equation}
and the map
\begin{align*}\Phi_{E}:\ &\mc \hookrightarrow \azde, \\
&t\longmapsto [[\mu_t]]_{AZ}.
\end{align*}
We claim that  $\Phi_{E}$ is an isometric
embedding.

 Note that
\begin{equation}\label{Eq:azsft1}
\mu_s-\mu_t=\begin{cases}
(s-t)\mu(z),\,&z\in B(p),\\
(Re(s)-Re(t))\mu(z),\, &z\in E.
\end{cases}
\end{equation}
It is easy  to check the equality:
\begin{equation}\label{Eq:stdaz}
\|[[\mu_s-\mu_t]]_{AZ}\|=|s-t|,\; t,\,s\in \mc.
\end{equation}

It remains to show that when $r$ varies in a suitable range, the geodesic planes $\Phi_E(\mc)$ are mutually different. Fix $t=\lambda i$ where $\lambda\in (0,1).$
Since $Re(t)=0$, $\mu_t(z)=0$ on $E$. Therefore, none of inner points in the arc $\pa\de\cap \pa E$  is a substantial boundary points for $[[\mu_t]]_{AZ}$ but  all  points in the arc $\pa\de\cap \pa B(p)$ are  substantial boundary points. Therefore, when $r$ varies, $[[\mu_t]]_{AZ}$ are mutually different. Thus,  we get
infinitely many geodesic planes as required. In particular, these geodesic planes  contain the straight line
$\{[[t\mu]]_{AZ}:\,t\in \mr\}.$
\end{proof}
The following two corollaries follow from the proof of two cases above separately.

\begin{cor}\label{Th:infgeoats1}
Suppose $\azmu$ is not a substantial point in $\azde$.  Then there are infinitely many
 geodesics connecting $\azmu$ to the
basepoint $\oaz$.
\end{cor}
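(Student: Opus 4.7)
The plan is to recycle the construction from Case~1 of the proof of Theorem~\ref{Th:infdisks} and restrict attention to the real parameter interval $[0,1]$. Using Lemma~\ref{Th:nonstrebel} and a normalization, pick a non-Strebel extremal $\mu\in\azmu$ with $\|\mu\|_\infty=b^*(\mu)=1$. Since $\azmu$ is not substantial, fix a boundary point $q\in\pa\de$ that is not substantial; by Lemma~\ref{Th:asynu} we may assume $b^*_q(\mu)<1$, so there is a small neighbourhood $B(q)$ on which $|\mu|\le\rho<1$. Choose $\delta\in M(\de)$ with $\delta\equiv 0$ on $\de\backslash B(q)$, $\|\delta\|_\infty\le\beta<1-\rho$, and
\begin{equation*}
c:=\sup_{\qdde}\limsup_{n\to\infty}Re\iint_\de\delta\vp_n\,dxdy>0,
\end{equation*}
where the supremum is over sequences $\{\vp_n\}\subset\qdde$ degenerating towards $q$ (this is precisely property (\ref{Eq:lakic}) exploited in the preceding proof).

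For each small $\alpha>0$ with $\rho+\alpha\beta<1$ and a fixed $t_0\in(0,1)$, pick $\sigma_\alpha\in\Sigma'$ with $\sigma_\alpha(t)=\alpha t$ on $[0,t_0/2]$ and $\sigma_\alpha(t)\equiv 0$ for $|t|\ge t_0$. As a special case of the argument in Case~1 of Theorem~\ref{Th:infdisks}, the map $\Phi_{\sigma_\alpha}:\mc\to\azde$, $t\mapsto[[\mu_t^\alpha]]_{AZ}$ with $\mu_t^\alpha:=t\mu+\sigma_\alpha(t)\delta$, is an isometric embedding. Its restriction to $[0,1]\subset\mr\subset\mc$ is therefore an isometric image of a compact interval whose endpoints are $\oaz$ (at $t=0$) and $\azmu$ (at $t=1$, using $\sigma_\alpha(1)=0$), i.e., a Busemann geodesic from $\oaz$ to $\azmu$.

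It remains to show that as $\alpha$ ranges over a small positive interval the resulting geodesics are pairwise set-theoretically distinct. Since both parametrisations are isometric and start at $\oaz$, a common point $[[\mu_t^{\alpha_1}]]_{AZ}=[[\mu_s^{\alpha_2}]]_{AZ}$ forces $s=t$, so coincidence of the two geodesics would force $[[\mu_t^{\alpha_1}]]_{AZ}=[[\mu_t^{\alpha_2}]]_{AZ}$ for every $t\in[0,1]$. For $t\in(0,t_0/2)$ the difference $\mu_t^{\alpha_1}-\mu_t^{\alpha_2}$ equals $t(\alpha_1-\alpha_2)\delta$, supported in $B(q)$; by linearity of $\azde$ and the norm formula $\|[[\cdot]]_{AZ}\|=\sup_{\qdde}\limsup Re\iint\,\cdot\,\vp_n\,dxdy$, one gets $\|[[t(\alpha_1-\alpha_2)\delta]]_{AZ}\|=t|\alpha_1-\alpha_2|\,c>0$, a contradiction. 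The main point requiring care is guaranteeing the existence of $\delta$ satisfying the lower bound $c>0$, a standard bump concentration argument around $q$ exactly as in the proof of Theorem~\ref{Th:infdisks}; once that is in hand the proposal above goes through with no further analytic work, since the isometric embedding property has already been established there.
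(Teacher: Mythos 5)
Your proposal is correct and is essentially the paper's own argument: the author explicitly derives this corollary from Case~1 of the proof of Theorem~\ref{Th:infdisks}, restricting the isometric embeddings $\Phi_{\sigma_\alpha}$ to $[0,1]$ and separating the resulting geodesics via (\ref{Eq:lakic}), exactly as you do (your extra remark that an isometric parametrisation starting at $\oaz$ forces equal parameters at a common point is a welcome bit of added care). The only cosmetic slip is that $\|[[t(\alpha_1-\alpha_2)\delta]]_{AZ}\|\ge t|\alpha_1-\alpha_2|c$ rather than $=$, since $c$ is a supremum only over sequences degenerating towards $q$; the inequality is all you need.
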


\begin{cor}
Suppose that $\azmu$ ($\neq \oaz$) is  a substantial point in $\azde$ and $\mu$ is a non-Strebel extremal representative.  Then there are infinitely many
 geodesic planes containing the straight line
$\{[[t\mu/k]]_{AZ}:\,t\in \mr\},$ where $k=\|\mu\|_\infty>0$.
\end{cor}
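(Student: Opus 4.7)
The plan is to directly specialize the construction used in Case 2 of the proof of Theorem \ref{Th:infdisks} after a harmless normalization. Write $\hat\mu := \mu/k$, so that $\|\hat\mu\|_\infty = 1$. Because multiplication by a positive real constant preserves the set of degenerating Hamilton sequences converging to any given boundary point, $\hat\mu$ inherits both the non-Strebel extremality and the substantial-boundary structure of $\mu$; in particular, $[[\hat\mu]]_{AZ}$ is still a substantial point of $\azde$. Hence $\hat\mu$ meets all the hypotheses of Case 2.

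Fix an arbitrary $p \in \pa\de$. For each small $r > 0$, set $B_r(p) = \{z \in \de : |z - p| < r\}$, $E_r = \de \setminus B_r(p)$, and define $\mu_t^{(r)}$ by formula (\ref{Eq:infgeodisk1}) with $\mu$ replaced by $\hat\mu$ and with $B(p), E$ replaced by $B_r(p), E_r$. Case 2 of Theorem \ref{Th:infdisks} tells us that the induced map $\Phi_{E_r}: \mc \to \azde$, $t \mapsto [[\mu_t^{(r)}]]_{AZ}$, is an isometric embedding, i.e., a geodesic plane. To verify that each such plane contains the prescribed straight line, observe that when $t \in \mr$ we have $Re(t) = t$, so the two cases of (\ref{Eq:infgeodisk1}) collapse into one and $\mu_t^{(r)}(z) = t\hat\mu(z) = t\mu(z)/k$ on all of $\de$. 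Therefore $\Phi_{E_r}(\mr) = \{[[t\mu/k]]_{AZ} : t \in \mr\}$ for every $r$, which is exactly the straight line in the statement (and in particular contains both $\oaz$ at $t=0$ and $\azmu$ at $t=k$).

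Finally, to see that distinct values of $r$ produce distinct geodesic planes, I would invoke the substantial-boundary-point argument from Case 2 verbatim: choose $t = i\lambda$ with $\lambda \in (0,1)$, so that $Re(t) = 0$ and $\mu_t^{(r)}$ vanishes on $E_r$. This prevents any interior point of the arc $\pa\de \cap \pa E_r$ from being a substantial boundary point for $[[\mu_t^{(r)}]]_{AZ}$, while every point of $\pa\de \cap \pa B_r(p)$ is one. Since these arcs depend genuinely on $r$, the points $[[\mu_t^{(r)}]]_{AZ}$ are mutually distinct as $r$ varies in a small interval, and therefore so are the planes $\Phi_{E_r}(\mc)$. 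There is no substantive obstacle here, since Theorem \ref{Th:infdisks} carries out the actual isometry verification; the only delicate point is checking that the rescaling $\mu \mapsto \mu/k$ preserves the substantial-point and non-Strebel-extremal hypotheses, which is immediate from the definitions.
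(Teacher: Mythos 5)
Your proposal is correct and takes essentially the same route as the paper, which derives this corollary directly from Case 2 of the proof of Theorem \ref{Th:infdisks}: one runs the construction (\ref{Eq:infgeodisk1}) for varying radii $r$, observes that for real $t$ the two branches coincide so every plane $\Phi_{E_r}(\mc)$ contains the line $\{[[t\mu/k]]_{AZ}:t\in\mr\}$, and distinguishes the planes via the substantial-boundary-point argument at $t=i\lambda$. Your explicit normalization $\hat\mu=\mu/k$ only makes precise a rescaling the paper leaves implicit.
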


\renewcommand\refname{\centerline{\Large{R}\normalsize{EFERENCES}}}
\medskip
\addcontentsline{toc}{section}{\bf\large{R}\normalsize{EFERENCES}}

\noindent \textit{Guowu Yao}\\
Department of Mathematical Sciences\\
  Tsinghua University\\Beijing,  100084,  People's Republic of
  China \\
  E-mail: \texttt{gwyao@math.tsinghua.edu.cn}
\end{document}